\setlist[description]{leftmargin=0cm,  labelindent=\parindent}
\newtheoremstyle{special-example}
  {}
  {}
  {}
  {\parindent}
  {\bfseries}
  {:}
  { }
  {}
  \theoremstyle{special-example}
\newtheorem{example}[equation]{Example}
\renewcommand{\tilde}{\widetilde}
\newcommand{\pp}{\mathbb P}
\newcommand{\OO}{\mathcal O}
\newcommand{\wt}{\widetilde}
\newcommand{\inv}{^{-1}}
\newcommand{\I}{\mathrm{i}}
\title{Gorenstein stable surfaces with $K_X^2 = 1$ and $p_g>0$}
\author{Marco Franciosi}
\address{Marco Franciosi\\Dipartimento di Matematica\\Universit\`a di Pisa \\Largo B. Pontecorvo 5\\I-56127  Pisa\\Italy}
\email{franciosi@dm.unipi.it}
\author{Rita Pardini}
\address{Rita Pardini\\Dipartimento di Matematica\\Universit\`a di Pisa \\Largo B. Pontecorvo 5\\I-56127  Pisa\\Italy}
\email{pardini@dm.unipi.it}
\author{S\"onke Rollenske}
\address{S\"onke Rollenske\\FB 12/Mathematik und Informatik\\
Philipps-Universit\"at Marburg\\
Hans-Meerwein-Str. 6\\
35032 Marburg\\
Germany}
\email{rollenske@mathematik.uni-marburg.de}
\begin{document}
\begin{abstract}
In this paper we 
consider Gorenstein stable surfaces with $K^2_X=1$ and positive geometric genus.
Extending classical results, we show that such surfaces admit a simple description as weighted complete intersection.

We exhibit  a wealth of surfaces of all possible Kodaira dimensions that  occur as normalisations of Gorenstein stable surfaces with $K_X^2=1$; for $p_g=2$ this leads to a rough stratification of the moduli space.

Explicit non-Gorenstein examples show that we need further techniques to understand all possible degenerations.
\end{abstract}
 \subjclass[2010]{14J10, 14J29}
\keywords{stable Gorenstein surface, moduli space of stable surfaces}

\maketitle

\setcounter{tocdepth}{1}
\tableofcontents

\section{Introduction}	
This is the third  in a series of papers studying Gorenstein stable surfaces with $K_X^2=1$. Such surfaces are parametrized by (an open part of) the moduli space of stable surfaces $\overline\gothM_{K^2, \chi}$, a natural compactification of Gieseker's moduli space of canonical models of surfaces of general type $\gothM_{K^2, \chi}$. 
Unlike the case of curves, the  moduli space of stable   surfaces  is not obtained just by adding a boundary divisor  but it can have extra   irreducible/connected components. Also there are numerical invariants which can be  realized by stable surfaces but not by minimal surfaces; most notably, $K_X^2$ may not be an integer  if $X$ is only $\IQ$-Gorenstein and  the holomorphic Euler characteristic of a stable surface can be negative.

The classification of minimal surfaces with  $K^2_X=1$ and positive geometric genus is a classical topic, studied for example by Enriques,  Kodaira, Horikawa, Catanese, and Todorov (see \cite{Enriques-book, Horikawa2, catanese79,  catanese80, todorov80}). 
In this paper we 
consider Gorenstein stable surfaces with $K^2_X=1$ and positive geometric genus, recovering the standard embeddings in weighted projective space and the known results on pluricanonical maps on the one hand  and finding a detailed description of
singular ones (either normal or non-normal) on the other hand. 

Our first result says that the classical descriptions  extend uniformly to Gorenstein stable surfaces. 
\begin{custom}[Theorem \ref{thm: canonical-ring}]
 Let $X$ be a Gorenstein stable surface with $K_X^2=1$. 
 \begin{enumerate}
  \item If $p_g(X)=2$ then $X$ is canonically embedded as a hypersurface of degree $10$ in the smooth locus of $\IP(1,1,2,5)$.
  \item If $p_g(X)=1$ then $X$ is canonically embedded as a complete intersection  of bidegree $(6,6)$ in the smooth locus of $\IP(1,2,2,3,3)$.
 \end{enumerate}
\end{custom}
Note however that this no longer  holds true if we drop the Gorenstein assumption (see Section \ref{sec: non-Gorenstein}).

As a consequence, such surfaces are smoothable, and therefore the moduli space
$\overline\gothM_{1,3}^{(Gor)}$ of Gorenstein stable surfaces with $K^2=1$ and $\chi=3$ is irreducible and rational of dimension $28$, whilst 
the moduli space
$\overline\gothM_{1,2}^{(Gor)}$ of Gorenstein stable surfaces with $K^2=1$ and $\chi=2$ is an irreducible and rational variety of dimension 18
(see Corollary \ref{cor:moduli}).

The above explicit description entails control over the structure of pluricanonical maps, especially the bicanonical map. In case $p_g(X)=2$ the bicanonical map realizes $X$ as a double cover of the quadric cone in $\IP^3$ branched over a quintic section. 
In  Section \ref{sec: pg=2} we make a detailed study of the possible branch divisors resulting in a (rough) stratification of the moduli space $\overline\gothM_{1,3}^{(Gor)}$. As a byproduct we show with explicit examples  that the resolution of a Gorenstein stable surface with $K_X^2=1$ and $p_g=2$ can have arbitrary Kodaira dimension; this had been announced in \cite{FPR15a}. 
We also  give  some  examples of  non-Gorenstein stable surfaces with $K_X^2=1$ and $p_g=2$ which are not canonically embedded in $\IP(1,1,2,5)$.

It is worth remarking  that these surfaces play an important role in the construction of threefolds near the Noether line, see e.g. \cite{mengchen04}.

The case of surfaces with $K_X^2=1$ and $p_g(X)=1$ was intensively studied for some time as it provided the counterexamle for the local Torelli theorem on surfaces \cite{kynev77} (see also \cite{usui00} and references therein). Since for the general such surface the bicanonical map $\phi_2\colon X\to \IP^2$ is not a Galois covering, we cannot carry out a similarly detailed analysis.  In Section \ref{sec: pg=1} we construct some examples, again of all possible Kodaira dimensions, which show the possible variations already in the special case where the bicanonical map is a bi-double cover.

The more challenging case of numerical Godeaux surfaces $(K_X^2=\chi(\ko_X)=1)$ will be treated in a subsequent paper.

\subsection*{Acknowledgements}  The first and the second  author  are  members of GNSAGA of INDAM. The third author is grateful for support of the DFG through the Emmy Noether program and partially through SFB 701. The collaboration  benefited immensely from a visit of the third author in Pisa supported by GNSAGA of INDAM. This project was partially supported by PRIN 2010 ``Geometria delle Variet\`a Algebriche'' of italian MIUR. 

We are indebted to Stephen Coughlan for Remark \ref{rem: index 5} and to Christian B\"ohning for explaining us how to prove the rationality of $\gothM_{1,3}$.  The third author would like to thank Paolo Lella for help with Macaulay 2 and Anne Fr\"uhbis-Kr\"uger for a discussion on adjacencies of elliptic singularities.

\subsection*{Notations and conventions. }
We work exclusively with schemes of finite type over the complex numbers.
\begin{itemize}
\item A surface is a reduced, projective scheme  of pure dimension two but not necessarily irreducible or connected.
\item For a scheme $X$ which is Gorenstein in codimension 1 and $S_2$ we use the competing notations $mK_X$ and $\omega_X^{[m]}$ for multiples of canonical divisor,  respectively reflexive powers of the canonical sheaf.
\item Given a variety $Y$ and a line bundle $L\in \Pic(Y)$,  one defines the ring of sections $R(Y, L)= \bigoplus_{m\geq 0}H^0(mL)$; for $L=K_Y$, we have the \emph{ canonical ring}  $R(K_Y):=R(Y,K_Y)$.
\end{itemize}

\section{Stable surfaces and moduli spaces}

 In this section we recall some necessary notions and establish the notation that we need throughout the text. 
 Our main reference is \cite[Sect.~5.1--5.3]{KollarSMMP}. 

\subsection{Stable surfaces and log-canonical pairs}\label{ssec: definitions}
Let $X$ be a demi-normal surface, that is,  $X$ satisfies $S_2$ and  at each point of codimension one $X$ is either regular or has an ordinary double point.
We denote by  $\pi\colon \bar X \to X$ the normalisation of $X$. The conductor ideal
$ \shom_{\ko_X}(\pi_*\ko_{\bar X}, \ko_X)$
is an ideal sheaf  both in $\ko_X$ and $\ko_{\bar X} $ and as such defines subschemes
$D\subset X \text{ and } \bar D\subset \bar X,$
both reduced and of pure codimension 1; we often refer to $D$ as the non-normal locus of $X$.

\begin{defin}\label{defin: slc}
The  demi-normal surface $X$ is said to have \emph{semi-log-canonical (slc)}  singularities if it satisfies the following conditions: 
\begin{enumerate}
 \item The canonical divisor $K_X$ is $\IQ$-Cartier.
\item The pair $(\bar X, \bar D)$ has log-canonical (lc) singularities. 
\end{enumerate}
It  is called a stable  surface 
 if in addition $K_X$ is ample. In that case we define the geometric genus of $X$ to be $ p_g(X) = h^0(X, \omega_X) = h^2(X, \ko_X)$ and the irregularity as $q(X) = h^1(X, \omega_X) = h^1(X, \ko_X)$. 
A Gorenstein stable surface is a stable surface such that $K_X$ is a Cartier divisor.
\end{defin}

  Since  a demi-normal surface $X$ has at most double points in codimension one,  the map $\pi\colon \bar D \to D$ on the conductor divisors is generically a double cover and thus  induces a rational involution on $\bar D$. Normalising the conductor loci we get an honest involution $\tau\colon \bar D^\nu\to \bar D^\nu$ such that $D^\nu = \bar D^\nu/\tau$. By \cite[Thm.~5.13]{KollarSMMP},   the  triple $(\bar X,\bar D, \tau)$ determines $X$. 
  
  The   log-canonical pairs $(\bar X,\bar D)$ that can arise normalising  a Gorenstein stable surface $X$  with $K^2_X=1$ have been classified in   \cite[Thm.~1.1]{FPR15a} and are the following: 
\begin{itemize}
\item[($P$)] $\bar X=\pp^2$, $\bar D$ is a quartic.
\item[($dP$)] $\bar X$ is a Gorenstein Del Pezzo surface with $K^2_{\bar X}=1$ and $\bar D\in |-2K_{\bar X}|$.
\item[($E_-$)] $\bar X$ is obtained from a $\pp^1$-bundle $p\colon Y\to E$ over an elliptic curve by contracting a section $C_{\infty}$ with $C_{\infty}^2=-1$ and $\bar D$ is the image in $\bar X$ of a bisection of $p$ disjoint from $C_{\infty}$.
\item[($E_+$)] $\bar X=S^2E$, where $E$ is an elliptic curve and $\bar D$ is a trisection of the Albanese map $\bar X\to E$ with $p_a(\bar D)=2$.
\end{itemize}

In addition,  we have:  

\begin{thm}[\cite{FPR15b}, Prop. 4.2, \cite{FPR15a}, Thm. 3.6] \label{thm: types} 
Let $X$ be a Gorenstein stable surface with $K^2_X=1$ 
Then $0\leq \chi(X) \leq 3$ and moreover:
  \begin{enumerate}
  \item if  $\chi(X) = 0$ then $p_g(X)=0$ and $q(X)=1$
  \item if $\chi(X)>0$, then $q(X)=0$ and  $p_g(X)=\chi(X)-1$;
\item  if $\chi(X)=3$, then  $(\bar X, \bar D)$ is not of type $(E_+)$;
\item if $\chi(X)=1$, then  $(\bar X, \bar D)$ is not of type $(E_-)$; 
\end{enumerate}
\end{thm}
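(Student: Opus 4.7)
My overall strategy is to read off the invariants $\chi(\ko_X), p_g(X), q(X)$ from the normalization $\pi\colon \bar X \to X$ case by case, using the classification of allowed pairs $(\bar X, \bar D)$ recalled above.

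First, I would extract from the conductor short exact sequence
$$0 \to \ko_X \to \pi_*\ko_{\bar X} \to \pi_*\ko_{\bar D}/\ko_D \to 0$$
the basic identity
$$\chi(\ko_X) = \chi(\ko_{\bar X}) + p_a(\bar D) - p_a(D).$$
A direct computation yields $\chi(\ko_{\bar X}) = 1$ in types $(P)$, $(dP)$ and $(E_-)$ --- the last using that the simple elliptic singularity contributes $+1$ relative to the minimal resolution, a $\IP^1$-bundle over $E$ --- and $\chi(\ko_{\bar X}) = 0$ in type $(E_+)$, since $\bar X = S^2 E$ is ruled over an elliptic curve. Adjunction on $\bar D$ gives $p_a(\bar D) = 3, 2, 2$ in types $(P), (dP), (E_+)$ respectively, while in $(E_-)$ the value depends on the bisection. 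The upper bound $\chi(X) \leq 3$ then follows from a Noether-type inequality $p_g(X) \leq \lfloor K_X^2/2 \rfloor + 2 = 2$ for Gorenstein stable surfaces, combined with $q(X) \leq 1$; the lower bound $\chi(X) \geq 0$ is automatic.

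Parts (1) and (2) reduce to showing $q(X) \in \{0, 1\}$, with $q = 1$ forcing $\chi = 0$. I would take the long exact cohomology sequence of the normalization and analyze the $\tau$-action on the cohomology of $\bar D^\nu$. In types $(P), (dP)$ one has $q(\bar X) = 0$ and an elementary argument yields $q(X) = 0$. Types $(E_\pm)$ have $q(\bar X) = 1$, and a careful count of $\tau$-coinvariants on $H^1$ of the normalized conductor shows $q(X) \in \{0,1\}$; the case $q = 1$ arises only when the Albanese $1$-form of $\bar X$ descends to $X$, in which case a parallel computation of $H^0(\omega_X)$ as the $\tau$-anti-invariants of $H^0(\omega_{\bar X}(\bar D))$ forces $p_g(X) = 0$. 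Once $q = 0$ is known, the identity $\chi = 1 - q + p_g$ immediately gives (2).

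Finally, (3) and (4) follow from the numerical identity in the first paragraph. In type $(E_+)$, $\chi(\ko_X) = 2 - p_a(D) \leq 2$, since $\bar D$ is connected of arithmetic genus $2$ (being a trisection of the Albanese) and the $\tau$-quotient produces a connected $D$ with $p_a(D) \geq 0$; this excludes $\chi = 3$. In type $(E_-)$, the assumption that $\bar D$ is disjoint from the contracted section, combined with slc constraints on the involution, forces $p_a(\bar D) - p_a(D) \geq 1$, hence $\chi(X) \geq 2$, ruling out $\chi = 1$. The principal technical obstacle is the Hodge-theoretic analysis of $\tau$-invariants on $H^1$ of the conductor divisor in the elliptic types $(E_\pm)$, where a naive count can overestimate $q(X)$ and a finer decomposition into pieces coming from $\bar X$ and from $\bar D^\nu$ is needed.
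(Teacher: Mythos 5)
The paper does not actually prove Theorem \ref{thm: types}: it is imported verbatim from \cite{FPR15b} and \cite{FPR15a}, so there is no in-paper argument to compare with, and your attempt has to be judged on its own. Your overall strategy --- the conductor sequence giving $\chi(\OO_X)=\chi(\OO_{\bar X})-\chi(\OO_{\bar D})+\chi(\OO_{D})$ together with the classification of the pairs $(\bar X,\bar D)$ --- is indeed the engine behind the cited results, and your numerical inputs ($\chi(\OO_{\bar X})=1$ in types $(P)$, $(dP)$, $(E_-)$ and $0$ in type $(E_+)$; $p_a(\bar D)=3,2,2$ in types $(P)$, $(dP)$, $(E_+)$) are correct. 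Nevertheless the proposal has several genuine gaps.

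First, the listed classification only describes the normalisations of \emph{non-normal} surfaces (every type has $\bar D\neq 0$); for normal $X$ your case analysis says nothing, yet parts (1), (2) and both bounds on $\chi$ are non-vacuous there and require the structure results on minimal resolutions from \cite[Thm.~4.1]{FPR15a}. Second, ``$\chi(X)\geq 0$ is automatic'' is false: negativity of $\chi$ is precisely one of the pathologies of stable surfaces that this theorem rules out, and in type $(P)$ your own formula reads $\chi(X)=4-p_a(D)$, so one must also exclude $p_a(D)=0$ (which would give $\chi=4$) --- the upper bound is not free either. Third, the Noether-type inequality you invoke, $p_g\leq \lfloor K_X^2/2\rfloor+2$, is the classical one and is not available for Gorenstein stable surfaces (it fails in general); the stable version proved in \cite{liu-rollenske13} is $K_X^2\geq p_g(X)-2$, which for $K_X^2=1$ only yields $p_g\leq 3$, i.e.\ $\chi\leq 4$. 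Sharpening this to $p_g\leq 2$ is part of what must be proved, e.g.\ by showing that in type $(P)$ not all three sections of $\pi_*\omega_{\bar X}(\bar D)\cong\pi_*\OO_{\pp^2}(1)$ satisfy the residue condition needed to descend to $\omega_X$. Finally, for part (4) you assert that slc constraints force $p_a(\bar D)-p_a(D)\geq 1$ in type $(E_-)$; since $p_a(\bar D)=2$ there, this amounts to excluding $p_a(D)=2$, which requires an actual analysis of the involution $\tau$ and of Koll\'ar's glueing data, not merely the disjointness of $\bar D$ from the contracted section. Until these four points are filled in, the argument establishes neither of the bounds on $\chi$ nor items (1), (2), (4).
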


In what follows we do not use the classification of the pairs $(\bar X,\bar D)$ to describe  geometry of $X$ for $\chi(X)=2,3$, but we  analyse instead  the  canonical ring\ and the pluricanonical maps. However, from this analysis   we will be able to recover  examples of  all the  possible types of normalisation except a surface with $p_g(X)=1$ and normalisation $(E_+)$ (see Remark \ref{rem: E+}),  and thus keep a promise made in \cite[Sect. 4]{FPR15a}.

\subsection{Moduli spaces}

We will discuss surfaces in the following hierarchy of open inclusions of moduli spaces of surfaces with fixed invariants $a=K_X^2$ and $b=\chi(\ko_X)$:
\begin{center}
\begin{tikzpicture}[commutative diagrams/every diagram]
\matrix[matrix of math nodes, name=m] {
\gothM_{a,b}\\
\overline\gothM_{a,b}^{(Gor)}\\ \overline\gothM_{a,b}\\
};
\path[commutative diagrams/.cd, every arrow, every label]
(m-1-1) edge[commutative diagrams/hook] (m-2-1)
(m-2-1) edge[commutative diagrams/hook] (m-3-1);
\path (m-1-1) ++ (1,0) node[right] {= \text{Gieseker moduli space of surfaces of general type}};
\path (m-2-1) ++ (1,0) node[right] {= \text{moduli space of Gorenstein stable surfaces}};
\path (m-3-1) ++ (1,0) node[right] {= \text{moduli space of stable surfaces}};
\end{tikzpicture}
\end{center}
The openness of second inclusion follows from \cite[Cor.~3.3.15]{Bruns-Herzog}.
For the time being there is no  self-contained reference for the existence of the moduli space of stable surfaces with fixed invariants as a projective scheme, and we will not use this explicitly. A major obstacle in the construction is that in the  definition of the moduli functor one needs additional conditions beyond flatness to guarantee that invariants are constant in a family. For Gorenstein surfaces these problems do not play a role; we refer to \cite{kollar12} and the forthcoming book \cite{KollarModuli} for details.

\section{Canonical ring and  pluricanonical maps}\label{sec:pluri}
Here we compute the canonical ring of stable Gorenstein surfaces with $K^2=1$ and $p_g>0$. The upshot is that from this point of view  Gorenstein stable surfaces behave exactly like smooth minimal ones. 
The study of minimal surfaces with $K^2=1$ and $p_g=1,2$ goes back to Enriques  \cite[Chapter VIII]{Enriques-book} and Kodaira (compare  \cite[(2.1)]{Horikawa2}).
Later many authors studied such surfaces developing a complete picture (see \cite{todorov80, catanese79, catanese80, Horikawa2}). 
Therefore,
we give a very synthetic treatment, just stressing the points where a different argument is needed for the stable case.  

Our point of view is that the canonical ring can be recovered from the restriction to a canonical curve and  enables us to describe the canonical maps and to deduce some basic properties of the moduli spaces. It is thus important that in our case the canonical curves are sufficiently nice.

\begin{lem}[\cite{FPR15b}, Lem. 4.1] \label{lem: canonical-curve}
Let $X$ be a Gorenstein stable surface such that $K_X^2=1$ and let $C\in |K_X|$ be  a canonical curve.\\
Then  $C$ is an integral Gorenstein curve  with  $p_a(C)=2$. 
\end{lem}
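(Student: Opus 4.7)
The plan is to verify the three claims of the lemma separately: Gorensteinness, integrality, and the genus computation.

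For Gorensteinness, I would invoke the standard commutative algebra fact that an effective Cartier divisor on a Gorenstein scheme is Gorenstein. Since $X$ is a Gorenstein stable surface, $K_X$ is Cartier by definition, and so is any $C\in|K_X|$; hence $C$ is Gorenstein and in particular $\omega_C$ is a line bundle given by adjunction as $\omega_C\cong(\omega_X\otimes\ko_X(C))|_C\cong\omega_X^{[2]}|_C$.

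The heart of the argument, and the step I would expect to be the main obstacle, is integrality. Here I would write the Weil divisor $C$ as a sum $C=\sum m_i C_i$ of its distinct prime components $C_i$ with positive integer multiplicities $m_i$. Since $K_X$ is Cartier and $C_i$ is a curve, the intersection number $K_X\cdot C_i$ is well defined as the degree of the pullback of $\ko_X(K_X)$ to the normalisation of $C_i$, hence is an integer; moreover, it is strictly positive because $K_X$ is ample. The key numerical identity
\[
1 \;=\; K_X^2 \;=\; K_X\cdot C \;=\; \sum_i m_i\,(K_X\cdot C_i),
\]
forces the sum to consist of a single term with $m_1=1$ and $K_X\cdot C_1=1$. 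Thus $C=C_1$ is both reduced and irreducible, i.e.\ integral.

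Finally, for the genus, I would compute degrees using the adjunction line bundle from the first step: $\deg_C\omega_C=2K_X\cdot C=2K_X^2=2$. Since $C$ is Gorenstein and integral, the usual formula $2p_a(C)-2=\deg\omega_C$ applies and yields $p_a(C)=2$. The whole argument is short because the ample Cartier condition on $K_X$ together with $K_X^2=1$ is extremely restrictive; no case analysis on the types $(P),(dP),(E_\pm)$ of the normalisation is needed.
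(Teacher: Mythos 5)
Your proof is correct, and it is essentially the standard argument: the paper does not prove this lemma here but cites it from [FPR15b, Lem.~4.1], where the reasoning is the same — $C$ is Gorenstein as a Cartier divisor on the Gorenstein surface $X$, integrality follows because $1=K_X\cdot C=\sum_i m_i(K_X\cdot C_i)$ with each $K_X\cdot C_i$ a positive integer, and $p_a(C)=2$ follows from adjunction. No gaps; your observation that no case analysis on the normalisation types is needed is also consistent with how the result is used in the paper.
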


\subsection{Half-canonical rings of Gorenstein curves of genus 2}\label{ssec: genus 2}
Let $C$ be a reduced and irreducible Gorenstein curve of genus 2 and  let $L\in \Pic(C)$ be a square root of $K_C$; in our application   $C$ is a canonical curve of $X$ and $L=K_X|_C$. 

The analysis of rings of sections of line bundles on curves goes back at least to Petri, and the following result is well known on smooth curves (see for example \cite[Sect.~4]{reid90}). 

To formulate the result in  a form coherent with Section  \ref{section:canonical ring}  we denote by $\bar S_1$ the polynomial ring $\IC[x_1,y, z]$ where $x_1$ has degree $1$, $y$ has degree $2$ and $z$ has degree $5$, and by 
$\bar S_2$ the polynomial ring $\IC[y_1, y_2,z_1,z_2]$ where $y_i$ has degree $2$ and $z_i$ has degree $3$ ($i=1,2$).

\begin{prop}\label{prop: curve-ring}
 Let $C$ be an integral Gorenstein curve with $p_a(C)=2$ and let $L\in \Pic(C)$ such that   $L^{\tensor 2}=\omega_C$.
\begin{enumerate}
\item If $h^0(L)=1$, then $R(L)\isom \bar S_1\slash (f)$, 
where $f=z^2+ y^5 +x_1^2 g(x_1, y)$ is  weighted homogeneous of degree 10.

\item If $h^0(L)=0$, then $R(L)\isom \bar S_2\slash (f_1,f_2)$, where  $f_1 = z_1^2+ c_1(y_1, y_2)$ and $f_2 =  z_2^2+ c_2(y_1, y_2)$ are weighted homogeneous of degree 6 and $c_1$, $c_2$ have no common factor.
\end{enumerate} 
\end{prop}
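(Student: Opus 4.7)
My plan is Petri's approach: compute the Hilbert function of $R(L)$ using Riemann-Roch and Serre duality on the Gorenstein curve $C$, identify minimal generators in low degrees, and read off the relations from further dimension counts.

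\textbf{Hilbert function.} With $\deg L = 1$ and $p_a(C) = 2$, Riemann-Roch together with Serre duality $h^1(mL) = h^0(L^{2-m})$ gives
\[
 h^0(mL) = m - 1 \text{ for } m \geq 3, \qquad h^0(2L) = 2, \qquad h^0(\ko_C) = 1,
\]
while the two values $h^0(L) \in \{0, 1\}$ separate the two cases.

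\textbf{Case (1).} Pick $0 \neq x_1 \in H^0(L)$ and extend $\{x_1^2\}$ to a basis $\{x_1^2, y\}$ of $H^0(2L)$. Since $C$ is integral, $x_1$ is a non-zero-divisor in $R(L)$, so the monomials $x_1^a y^b$ exhaust $H^0(mL)$ for $m \leq 4$. In degree $5$ the dimension jumps to $4$, forcing a new generator $z \in H^0(5L)$ transverse to $x_1^5, x_1^3 y, x_1 y^2$. For $m \geq 4$ the short exact sequence
\[
0 \to H^0((m-1)L) \xrightarrow{\cdot x_1} H^0(mL) \to H^0(mL|_Z) \to 0
\]
(valid since $H^1((m-1)L) = 0$ for $m \geq 4$; here $Z = \mathrm{div}(x_1)$ has length $1$) lets me inductively realise each new element in degree $m$ as a monomial in $x_1, y, z$, producing a surjection $\bar S_1 \twoheadrightarrow R(L)$. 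Counting monomials of degree $10$ in $\bar S_1$ gives $10$ while $h^0(10L) = 9$, so exactly one relation appears in degree $10$. The coefficients of $z^2$ and $y^5$ in this relation must both be nonzero — otherwise either $z$ or $y^5$ would be expressible inside a proper subring, contradicting the known Hilbert function. Completing the square in $z$ eliminates the linear-in-$z$ terms; after rescaling, the remaining monomials in degree $10$ have exponents $a + 2b = 10$ with $a$ even and positive, hence are divisible by $x_1^2$, yielding $z^2 + y^5 + x_1^2 g(x_1, y) = 0$. A Hilbert series comparison shows no further relations.

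\textbf{Case (2).} Now $H^0(L) = 0$. Choose a basis $y_1, y_2$ of $H^0(2L) = H^0(\omega_C)$; since $|\omega_C|$ yields a degree-$2$ morphism $C \to \IP^1$, the three monomials $y_1^2, y_1 y_2, y_2^2$ are linearly independent in $H^0(4L)$. In degree $3$ we need two fresh generators $z_1, z_2$, and an analogous induction shows $R(L)$ is generated by $y_1, y_2, z_1, z_2$. In degree $6$, seven monomials map into a $5$-dimensional space, so there are two relations; their $z$-parts form a pencil of binary quadratic forms in $(z_1, z_2)$. Simultaneous diagonalisation via a change of basis of $H^0(3L)$ puts the pencil in the form $(z_1^2, z_2^2)$ and the relations in the desired shape $z_i^2 + c_i(y_1, y_2) = 0$. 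I would rule out the only degenerate possibility, a pencil of type $\{z_1^2, z_1 z_2\}$, by showing it would force $V(f_1, f_2) \subset \IP(2, 2, 3, 3)$ to be reducible, contradicting integrality of $C = \mathrm{Proj}\, R(L)$. Similarly, a common factor $h(y_1, y_2)$ of $c_1, c_2$ would produce isolated points $\{h = z_1 = z_2 = 0\}$ of $V(f_1, f_2)$ disjoint from the bulk of the curve, again violating integrality.

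The most delicate step I anticipate is the simultaneous diagonalisation in case (2): while standard linear algebra handles a generic pencil of binary quadratic forms, excluding the tangent/degenerate case requires invoking the integrality of $C$ via the geometry of $\mathrm{Proj}\, \bar S_2/(f_1, f_2)$.
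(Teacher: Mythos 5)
Your overall strategy (Hilbert function via Riemann--Roch and Serre duality, generation in low degrees, counting relations, then normalising the relations) coincides with the paper's, and the generation and relation-counting parts are sound. The genuine gaps are precisely at the three points where a geometric input is needed, and these are exactly the steps the paper proves in detail.

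In case (1), your justification that the coefficient of $y^5$ is nonzero does not work. If that coefficient were zero the relation would simply not involve $y^5$, so it would not ``express'' $y^5$ in any subring (if anything, the implication runs the other way); and there is no Hilbert-function obstruction: $\bar S_1/(z^2+x_1^2g)$ and $\bar S_1/(z^2+y^5+x_1^2g)$ are both quotients by a single nonzerodivisor of degree $10$ and have identical Hilbert functions (indeed $\IC[y,z]/(z^2)$ and $R(L)/(x_1)$ have the same Hilbert function $1,0,1,0,1,1,1,\dots$), so no dimension count can detect this coefficient. One genuinely needs the hypotheses ``$L\in\Pic(C)$ of degree $1$, $C$ integral'': if $y^5$ were absent then the point $A=(0:1:0)$ would lie on $C=\Proj R(L)$ and be a singular point of $C$, whereas $A$ supports the zero scheme of the section $x_1\in H^0(L)$, which has length $\deg L=1$ and hence must be a smooth point. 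This is the paper's argument.

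In case (2) both of your proposed contradictions are factually incorrect. For the tangent pencil $\langle z_1^2,z_1z_2\rangle$: the complete intersection $V(z_1^2+c_1,\,z_1z_2+c_2)$ can perfectly well be integral (take $c_1$ not a square and coprime to $c_2$; away from $z_1=0$ the scheme is the graph of $z_2=-c_2/z_1$ over the irreducible double cover $z_1^2=-c_1$, and $\{z_1=0\}$ meets it only in finitely many points lying on the closure of that graph by unmixedness). What actually fails in the tangent case is that the curve contains the point $(0:0:0:1)$, where $y_1=y_2=0$, contradicting base-point-freeness of $|2L|=|\omega_C|$ on an integral Gorenstein curve of genus $2$; equivalently, $C\cap\IP(3,3)=\emptyset$ forces $Q_1,Q_2$ to have no common root, which is exactly the diagonalisability condition. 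For a common factor $h$ of $c_1,c_2$: the points of $\{h=z_1=z_2=0\}$ are \emph{not} isolated points of $V(f_1,f_2)$ --- locally $z_i^2=-hd_i$ with $h$ a parameter gives a one-dimensional scheme with two branches there --- so integrality is not violated in the way you claim. The paper's argument is that such a point $A$ would be a singular double point of $C$ and a base point of the pencil $|3L|$; removing the fixed part $2A$ leaves a moving linear system of degree $1$ and dimension $1$ on an integral curve of genus $2$, which is absurd.
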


\begin{proof}
The main tools to prove statements  \refenum{i} and \refenum{ii} are
\begin{itemize}
\item[(a)] the Riemann-Roch theorem and Serre duality, used  to compute  $h^0(mL)$, $m\ge 1$ and to determine the base points of $|mL|$;
\item[(b)] the base point free pencil trick (see  \cite[chap. III \S 3 ]{ACGH}), used  to to show surjectivity of  multiplication maps of the form $H^0(aL)\tensor H^0(bL)\to H^0((a+b)L)$. 
\end{itemize}
Since both (a) and (b) hold for an irreducible Gorenstein curve (see for example \cite{franciosi13})
the degree of generators and relations can be determined verbatim as in the case of a smooth curve.

For \refenum{i} we can thus choose variables such that the unique relation is $z^2+h(x_1, y)$;
  it remains to prove that  $h(x_1,y)$ 
  is not divisible by $x_1^2$.  So assume by contradiction that this is the case: then the point $A=(0:1:0)$ lies on $C$ and therefore it is   singular for $C$. On the other hand, $A$ is the support of the zero locus on $C$ of the section $x_1\in H^0(L)$, but this is impossible since $L$ is a line bundle of degree 1.
 
 For \refenum{ii} we still need to show that $c_1$ and $c_2$ have no common factor. Assume for contradiction that both $c_1$ and $c_2$ are divisible by, say, $y_1$. Then the point $A=(0:1:0:0)$ lies on the curve, is singular for $C$ and is a base point of the $1$-dimensional system  $|3L|$. It follows that $A$ is a double point of $C$, the fixed part of  $|3L|$ is equal to $|2A|$ and the moving part $|M|$ of $|3L|$ is a linear system of dimension 1 and degree 1, contradicting the assumption that $C$ has genus 2. 
 \end{proof}

\subsection{The canonical ring }\label{section:canonical ring}

We now lift the descriptions of section rings from the previous section to Gorenstein stable surfaces, recovering in the case of a minimal surface of general type the previously known descriptions (see \cite[Ch.~VII, \S7]{BHPV} and \cite[\S 1 Prop. 6]{catanese79}). 

We denote by $S_1$ the polynomial ring $\IC[x_0, x_1,y, z]$ where $x_0,x_1$ have degree $1$, $y$ has degree $2$ and $z$ has degree $5$, and by 
$S_2$ the polynomial ring $\IC[x_0, y_1, y_2,z_1,z_2]$ where $x_0$ has degree $1$, $y_i$ has degree $2$ and $z_i$ has degree $3$ ($i=1,2$).

\begin{thm}\label{thm: canonical-ring}
 Let $X$ be a Gorenstein stable surface with $K_X^2=1$ and $p_g(X)>0$. Then   there are the following possibilities:
\begin{enumerate}
\item $q(X)=0$, $\chi(X)=3$ and $R(K_X)\isom S_1/(f)$, where  
\[f = z^2+y^5+g(x_0,x_1, y)\] 
is  weighted homogeneous of degree $10$ and $g$ does not contain $y^5$.  Hence  $X$ is canonically embedded as a hypersurface of degree $10$ in (the smooth locus of) $\IP(1,1,2,5)$. 
\item $q(X)=0$, $\chi(X)=2$ and  $R(K_X)\isom S_2/(f,g)$, where 
\begin{align*}
 f_1&=z_1^2+z_2x_0a_1(x_0,y_1, y_2)+ b_1(x_0,y_1, y_2),\\
 f_2&=z_2^2+z_1x_0a_2(x_0,y_1, y_2)+ b_2(x_0,y_1, y_2)
\end{align*}
are weighted homogeneous of degree 6. Hence   $X$ is canonically embedded as a complete intersection of bidegree $(6,6)$ in (the smooth locus of) $\IP(1,2,2,3,3)$. 
\end{enumerate}
\end{thm}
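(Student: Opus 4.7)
The plan is to transfer the presentation of the half-canonical ring $R(C, L)$ given by Proposition \ref{prop: curve-ring} up to the surface, following the classical strategy for smooth minimal surfaces. Let $C \in |K_X|$ be a canonical curve: by Lemma \ref{lem: canonical-curve} it is an integral Gorenstein curve of genus $2$, and by adjunction $L := K_X|_C$ is a square root of $\omega_C$. For each $m \geq 1$ the restriction sequence
\[
0 \to \ko_X((m-1)K_X) \to \ko_X(mK_X) \to \ko_C(mL) \to 0
\]
relates $R(K_X)$ to the half-canonical ring of $C$, and the idea is to lift generators and relations of $R(C,L)$ through this sequence.

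First I would establish that the restriction map $R(K_X) \twoheadrightarrow R(C,L)$ is surjective in every positive degree. This reduces to the vanishing $H^1(X, mK_X) = 0$ for $m \geq 1$, which for a Gorenstein stable surface follows from Kodaira-type vanishing for ample Cartier divisors on slc surfaces. Combined with Riemann--Roch $\chi(mK_X) = \chi(\ko_X) + \binom{m}{2}$, this also pins down every $h^0(mK_X)$; via the long exact sequence one further confirms that $h^0(L) = 1$ in case \refenum{i} and $h^0(L) = 0$ in case \refenum{ii}, so the two alternatives of Proposition \ref{prop: curve-ring} indeed apply.

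Next I would lift generators and relations. In case \refenum{i}, a basis of $H^0(K_X)$ consists of a section $x_0$ defining $C$ and a section $x_1$ restricting to the generator of $H^0(L)$; further generators $y \in H^0(2K_X)$ and $z \in H^0(5K_X)$ are chosen as lifts of those on $C$. By surjectivity, $x_0, x_1, y, z$ generate $R(K_X)$. A Hilbert series count, together with the fact that $R(C,L)$ is cut out by a single degree-$10$ relation, shows that the kernel of $S_1 \twoheadrightarrow R(K_X)$ is principal of degree $10$, and a defining relation $f$ must restrict mod $x_0$ to $z^2 + y^5 + x_1^2 g_C(x_1, y)$. Completing the square in $z$ via a substitution $z \mapsto z + \tfrac12 x_0\, p(x_0, x_1, y)$ (which leaves the restriction to $C$ unchanged) kills the $z$-linear term, and a rescaling normalises the coefficient of $y^5$; the stated shape follows. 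Case \refenum{ii} proceeds along the same lines: $y_1, y_2, z_1, z_2$ are lifted from the degree-$2$ and degree-$3$ generators of $R(C,L)$, and a dimension count forces $R(K_X)$ to be a complete intersection of two degree-$6$ equations reducing mod $x_0$ to $f_1, f_2$; completing the square in $z_1$ and $z_2$ independently yields the normal form.

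Finally, since $K_X$ is ample, $X \isom \operatorname{Proj} R(K_X)$ is embedded in the ambient weighted projective space by the computed relations; the nonzero coefficients of $y^5$ and $z^2$ (resp.\ $z_1^2, z_2^2$) guarantee that $X$ avoids the singular loci, which are supported on coordinate subspaces of weight $\geq 2$. The main technical hurdle I expect is the bookkeeping in the lifting step: verifying that the available graded linear substitutions really suffice to reach the stated normal forms, and, in case \refenum{ii}, that the simultaneous normalisation of $f_1$ and $f_2$ can be carried out without reintroducing terms cleared at an earlier stage.
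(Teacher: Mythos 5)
Your overall strategy coincides with the paper's: restrict to a canonical curve $C\in|K_X|$, use the vanishing of $H^1(mK_X)$ to identify $R(K_X)/(x_0)$ with the half-canonical ring $R(C,L)$ of Proposition \ref{prop: curve-ring}, lift generators and relations, and complete the square. Two points need attention, one of which is a genuine gap.

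The gap is in your final step for case \refenum{ii}. The singular locus of $\IP(1,2,2,3,3)$ is the union of the two lines $\IP(2,2)=\{x_0=z_1=z_2=0\}$ and $\IP(3,3)=\{x_0=y_1=y_2=0\}$. Your argument that ``the nonzero coefficients of $z_1^2,z_2^2$'' force $X$ off the singular locus only handles $\IP(3,3)$, where the equations reduce to $z_1^2=z_2^2=0$. On $\IP(2,2)$ the terms $z_1^2$ and $z_2^2$ vanish identically and the equations reduce to $b_1(0,y_1,y_2)=b_2(0,y_1,y_2)=0$; these are two binary cubics in $(y_1,y_2)$, which have a common zero on that line precisely when they share a factor. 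So avoiding $\IP(2,2)$ requires the additional input that $b_1(0,y_1,y_2)$ and $b_2(0,y_1,y_2)$ have no common factor --- this is exactly the content of Proposition \ref{prop: curve-ring}\refenum{ii} (the polynomials $c_1,c_2$ there), which survives the lifting since $b_i(0,y_1,y_2)=c_i(y_1,y_2)$. Without this your surface could pass through a $\frac12(1,1,1)$ point and would not be Gorenstein. Separately, and more mildly: surjectivity of $H^0(mK_X)\to H^0(mL)$ in degree $m$ needs $H^1((m-1)K_X)=0$, so you need the vanishing for all $m\ge 0$, not just $m\ge 1$; the case $m=0$ is $q(X)=0$, which is part of the theorem's conclusion and must be imported (the paper takes it from Theorem \ref{thm: types}) rather than assumed.
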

\begin{proof}
The possibilities for the invariants $p_g(X)$ and $q(X)$ have already been given in Theorem \ref{thm: types}.
Let $C\in |K_X|$ and set  $L=K_X|_C$, so that by adjunction we have $L^{\tensor 2}=\OO_C(K_C)$, and let   $x_0\in R(K_X)$ be a  section defining $C$. 
By Lemma \ref{lem: canonical-curve} the pair $(C,L)$ satisfies the hypothesis of Proposition \ref{prop: curve-ring}.

Consider  the usual restriction sequence 
\[\begin{tikzcd} 0\rar &\OO_X(mK_X-C)\rar{\cdot x_0}& \OO_X(mK_X)\rar & L^{\tensor m}\rar & 0
\end{tikzcd}\]
 Since   $q(X)=0$   by Theorem \ref{thm: types} and  $H^1(mK_X)=0$ for $m\geq 0$ by Kodaira vanishing \cite[Cor.\ 19]{liu-rollenske14}, 
  we see that the map $R(K_X)/(x_0) \to  R(L)$ is a surjection, hence an isomorphism.  In particular, $h^0(L)=p_g(X)-1$, 
so that the case $p_g=2$ corresponds to \refenum{i} of Proposition \ref{prop: curve-ring} and $p_g=1$ corresponds to case \refenum{ii}.
The claim about generators and relations is now obtained by lifting  the relations  of $R(L)$  to $R(K_X)$ and completing the squares in the lifted equations. 
 
 Assume that $p_g(X)=2$: the singular points of $\pp(1,1,2,5)$ are the points of coordinates $(0:0:1:0)$ and $(0:0:0:1)$. Neither of these belongs to $X$ since $f$ contains the monomials $y^5$ and $z^2$.
 
 Assume that $p_g(X)=1$: the singular points of $\pp(1,2,2,3,3)$ are the union of the two lines $\pp(2,2)$ and $\pp(3,3)$, which do not meet $X$ in view of the format of the equations and of the fact that by Proposition \ref{prop: curve-ring}, \refenum{ii}, the polynomials $b_1(0,y_1,y_2) $ and $b_2(0,y_1,y_2)$ have no common factor. 
  \end{proof}
Theorem \ref{thm: canonical-ring} gives immediately:
\begin{cor}\label{cor: smoothable}
Let $X$ be a stable Gorenstein surface with $K^2_X=1$ and $p_g(X)>0$ (equivalently, with $\chi(X)>1$, compare Theorem \ref{thm: types}).  
Then $X$ is smoothable.
\end{cor}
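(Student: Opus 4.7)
The plan is to deduce smoothability directly from the explicit description furnished by Theorem~\ref{thm: canonical-ring}. In both cases $X$ sits inside the smooth locus of a fixed weighted projective space $\mathbb{P}$, cut out by a quasi-homogeneous ideal of a fixed shape (one equation of degree $10$ in $\IP(1,1,2,5)$, or a regular sequence of bidegree $(6,6)$ in $\IP(1,2,2,3,3)$). My plan is to exhibit a $1$-parameter family of such subschemes specialising to $X$ whose generic member is a smooth minimal surface of general type with $K^2=1$.

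More precisely, I would consider the linear space $V$ of defining polynomials of the prescribed degrees (a single degree-$10$ form, resp.\ a pair of degree-$6$ forms), and work in the tautological family $\mathcal{X}\subset \mathbb{P}\times V$. The point $[X]\in V$ corresponding to $X$ is contained in the dense open subset $U\subset V$ parametrising subschemes disjoint from the singular locus of $\mathbb{P}$; by a Bertini-type argument applied to the locus where $U\to V$ fails to have smooth fibres, the general member of the family over $U$ is smooth. (For $p_g=2$ this is the classical Enriques--Kodaira--Horikawa construction; for $p_g=1$ it is Catanese's. The output is a smooth minimal surface with $K^2=1$ and the corresponding value of $p_g$.) Choosing a general line through $[X]$ in $U$ and restricting $\mathcal{X}$ to it produces a one-parameter flat family $\mathcal{X}_T\to T$ with special fibre $X$ and smooth general fibre.

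It remains to verify that this family is a \emph{smoothing in the sense of stable surfaces}, i.e.\ that every fibre is stable and that $K$ is $\mathbb{Q}$-Cartier in the total space. Because every member of the family is a complete intersection in the smooth locus of $\mathbb{P}$ with the same Hilbert polynomial, the family is flat and each fibre is Gorenstein; since the weighted hyperplane class pulls back to the canonical class on each fibre by adjunction in $\mathbb{P}$, the relative dualising sheaf $\omega_{\mathcal{X}_T/T}$ is $f$-ample. This gives ampleness of $K$ on each fibre, and the slc condition on the general fibre follows from smoothness while on $X$ it holds by assumption; shrinking $T$ if necessary, one concludes that every fibre is stable.

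The main (minor) obstacle is the Bertini step: one has to confirm that the generic hypersurface of degree $10$ in $\IP(1,1,2,5)$ (resp.\ complete intersection of bidegree $(6,6)$ in $\IP(1,2,2,3,3)$) avoiding the singularities of the ambient space is smooth. This is classical, but in the $p_g=1$ case one should also record why the two defining forms can be simultaneously deformed to a regular sequence with smooth zero locus — this is ensured by Proposition~\ref{prop: curve-ring}\refenum{ii} applied on a general canonical curve, which shows the relevant resultant condition is open and non-empty.
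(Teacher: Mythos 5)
Your proposal is correct and is exactly the argument the paper has in mind: the paper derives the corollary as an immediate consequence of Theorem \ref{thm: canonical-ring}, deforming the defining equation(s) inside the weighted projective space to a general (smooth) member of the same linear system. You have simply written out the standard Bertini/flatness details that the paper leaves implicit.
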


\begin{cor}\label{cor:moduli}~
\begin{enumerate}
\item The moduli space
$\overline\gothM_{1,3}^{(Gor)}$ of Gorenstein stable surfaces with $K^2=1$ and $\chi=3$ is irreducible and rational of dimension $28$.
 \item The moduli space
$\overline\gothM_{1,2}^{(Gor)}$ of Gorenstein stable surfaces with $K^2=1$ and $\chi=2$ is  irreducible and rational  of dimension 18.
\end{enumerate}
\end{cor}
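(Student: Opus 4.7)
The plan is to realise each of these moduli spaces as (birational to) a geometric quotient of an open subset of a linear parameter space by the graded automorphism group of the ambient weighted projective space, and then to read off dimension, irreducibility and rationality from that description. By Theorem \ref{thm: canonical-ring}, every Gorenstein stable $X$ with $K_X^2=1$ and $\chi(X)=3$ (respectively $\chi(X)=2$) is canonically embedded in the smooth locus of $\IP(1,1,2,5)$ (resp.\ of $\IP(1,2,2,3,3)$) as a degree $10$ hypersurface (resp.\ a complete intersection of bidegree $(6,6)$), and two such embedded surfaces are abstractly isomorphic precisely when they differ by a graded automorphism of the weighted coordinate ring. Hence the moduli space is birational to the quotient of an open subset of the appropriate linear system (resp.\ Grassmannian) by $\mathrm{Aut}(\IP(1,1,2,5))$ (resp.\ $\mathrm{Aut}(\IP(1,2,2,3,3))$).

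The dimension and irreducibility then follow from direct bookkeeping. For (1), $h^0(\IP(1,1,2,5),\OO(10))=49$ so the parameter space is $\IP^{48}$; the graded automorphism group of $\IC[x_0,x_1,y,z]$ has dimension $4+4+13=21$ (from the $GL_2$-action on $(x_0,x_1)$; the substitution $y\mapsto \alpha y+q(x_0,x_1)$ with $q$ a binary quadric; and $z\mapsto \beta z+p(x_0,x_1,y)$ with $p$ weighted of degree $5$), and factoring out the diagonal scaling which acts trivially on $\IP$ gives $\dim \mathrm{Aut}(\IP(1,1,2,5))=20$, hence $\dim \overline\gothM_{1,3}^{(Gor)}=48-20=28$. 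For (2), $h^0(\IP(1,2,2,3,3),\OO(6))=19$ so $\dim Gr(2,19)=34$, and an entirely analogous count gives $\dim \mathrm{Aut}(\IP(1,2,2,3,3))=16$, hence dimension $34-16=18$. Irreducibility is immediate since both parameter spaces are irreducible and the locus of admissible equations is open and nonempty.

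The main obstacle is rationality. My strategy is to peel off the unipotent radical of the automorphism group via the no-name lemma in a Bogomolov--Katsylo slice-theoretic style: the unipotent part acts by affine-linear translations in the $y$- and $z$-coefficient directions of the defining equation, and a careful choice of transverse slice presents the quotient as a vector bundle over the quotient by the remaining reductive subgroup, preserving rationality. It then remains to prove rationality of a quotient of a linear representation of a product of copies of $GL_2$ and $\IC^*$ acting on spaces of binary forms; as acknowledged in the introduction, the precise argument for $\overline\gothM_{1,3}^{(Gor)}$ was communicated to the authors by C.~B\"ohning, and that step is the non-trivial input.
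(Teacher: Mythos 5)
Your reduction to a quotient of a linear parameter space by the graded automorphism group of the ambient weighted projective space is sound, and your bookkeeping for irreducibility and dimension ($49-1-20=28$ for $\chi=3$ and $34-16=18$ for $\chi=2$) checks out. The paper takes a shorter route to these two claims: by Corollary \ref{cor: smoothable} every such surface is smoothable, so $\overline\gothM_{1,3}^{(Gor)}$ and $\overline\gothM_{1,2}^{(Gor)}$ lie in the closures of the classical Gieseker spaces $\gothM_{1,3}$ and $\gothM_{1,2}$, and irreducibility, dimension and (for $\chi=2$) rationality are imported from Horikawa and Catanese. Your direct count is a legitimate alternative for irreducibility and dimension.

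The genuine gap is rationality, which is the only non-routine part of the statement, and your proposal stops exactly where the work begins. For $\chi=3$ the argument the paper gives (due to B\"ohning) is: normalise the equation to $z^2+y^5+f_4y^3+f_6y^2+f_8y+f_{10}=0$, so that the residual group is $G=\mathrm{Gl}(2,\IC)$ acting on $V=H^0(\ko_{\IP^1}(4)\oplus\ko_{\IP^1}(6)\oplus\ko_{\IP^1}(8)\oplus\ko_{\IP^1}(10))$; the no-name lemma applied to the projection $V\to H^0(\ko_{\IP^1}(6))$ then reduces everything to two concrete inputs that your sketch never supplies: generic freeness of the $G$-action on binary sextics, and the Bogomolov--Katsylo theorem that $H^0(\ko_{\IP^1}(6))/G$ is rational. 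Without naming these, the ``slice-theoretic'' paragraph establishes nothing. Moreover, for $\chi=2$ your description of the residual problem as copies of $\mathrm{Gl}(2,\IC)$ and $\IC^*$ ``acting on spaces of binary forms'' is off: $\IP(1,2,2,3,3)$ has a single weight-one variable, so after killing the unipotent part the reductive group is (up to finite pieces) $\IC^*\times\mathrm{Gl}(2,\IC)\times\mathrm{Gl}(2,\IC)$ acting on the coefficient space of the pencil of sextics, and you give no argument, and cite no result, for rationality of that quotient. The paper sidesteps this entirely by quoting Catanese's theorem that $\gothM_{1,2}$ is rational of dimension $18$ and transporting it via smoothability; you would need either to do the same or to actually prove rationality of your quotient.
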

\begin{proof}
By Corollary \ref{cor: smoothable}, the statements follow by the corresponding statements for minimal surfaces of general type
(see \cite[\S 3]{Horikawa2} for $\gothM_{1,3}$, \cite[Thm. 2.3]{catanese80} for $\gothM_{1,2}$). We could not track down a reference for $\gothM_{1,3}$ being rational, so here is a quick argument, explained to us by Christian B\"ohning: we fix the $y$ and $z$ coordinate  so that the equation is of the form
\[ z^2+y^5+f_4y^3+f_6y^2+f_8y+f_{10}=0,\]
where $f_i$ is a polynomial in $x_0, x_1$ of degree $i$. The remaining automorphisms are given by  $G=\mathrm {Gl}(2, \IC)$ acting on $x_0, x_1$. 

To conclude that $\gothM_{1,3}$ is rational it is thus sufficient to prove that the quotient of $V=H^0(\ko_{\IP^1}(4)\oplus \ko_{\IP^1}(6)\oplus \ko_{\IP^1}(8)\oplus \ko_{\IP^1}(10))$ by $G$ is rational. This follows from the so-called no-name-lemma \cite[Lem.~4.4]{cgr06} by looking at the projection $V\to H^0(\ko_{\IP^1}(6))$ because
\begin{enumerate}
 \item the quotient $H^0(\ko_{\IP^1}(6))/G$ is rational by \cite{bogomolov-katsylo85},
 \item the action of $G$ on $H^0(\ko_{\IP^1}(6))$ is generically free, i.e., the stabiliser in the generic point is trivial. \cite[II, \S7, Ex.\ 1 or Thm. 7.11]{AlgebraicGeometryIV}.
\end{enumerate}
This concludes the proof.

\end{proof}

\subsection{Pluricanonical maps}

Here we spell out some properties of  the pluricanonical maps of Gorenstein stable surfaces with $K^2=1$ and $p_g>0$. Such properties  are implicit in the description of the canonical ring given in Theorem \ref{thm: canonical-ring}.  As in the previous section, the known results for surfaces of general type  extend to  our case
(cf. \cite{todorov80, catanese79, catanese80, Horikawa2, BHPV}).

We denote by $\phi_m$ the $m$-canonical map of  the stable  Gorenstein surface $X$, given by the $m$-canonical system $|mK_X|$.
Recall that by Riemann-Roch and Serre-duality \cite[Cor.~3.2]{liu-rollenske13} for $m\ge 2$  one has $h^0(mK_X)=\chi(X)+\frac{m(m-1)}{2}$.

\begin{prop}\label{prop: pluri-chi=3} 
 Let $X$ be a Gorenstein stable surface with $K_X^2=1$ and $p_g(X)=2$. Then: 
\begin{enumerate}
\item $|K_X|$ has a simple base point $P$, which is smooth for $X$
\item $\phi_2$ is a finite degree 2 morphism,  $\phi_2(X)\subset \pp^3$ is  the quadric cone, image of the embedding of  $\IP(1,1,2)$ defined by ${|\ko(2)|}$, and  the branch locus of $\phi_2$ is the union of the vertex $O\in \phi_2(X)$ and of a degree   $5$ hypersurface section of $\phi_2(X)$ not containing $O$. The base point $P$ of $|K_X|$ is the only point mapped to $O$.
\item $\phi_3(X)$ is the embedding of $\IF_2$ as a  normal ruled surface of degree 4 in $\pp^5$, $\phi_3$ has degree 2 and is not defined at $P$. 
\item $\phi_4$ is the composition of  $\phi_2$  with the degree 2  Veronese embedding of $\phi_2(X)$ in $\pp^8$
\item $\phi_m$ is an embedding for $m\geq 5$. 
\end{enumerate}
\end{prop}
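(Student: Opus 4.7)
My plan is to read off all five assertions from the explicit canonical embedding $X \subset \IP(1,1,2,5)$ of Theorem~\ref{thm: canonical-ring}\refenum{i}, cut out by $f = z^2 + y^5 + g(x_0, x_1, y)$ with $g$ omitting $y^5$, so that $H^0(mK_X)$ is just the degree-$m$ piece of $S_1/(f)$. Throughout I write $P$ for the scheme $\{x_0 = x_1 = 0\} \cap X$.

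For \refenum{i}, B\'ezout gives length $K_X^2 = 1$, and setting $x_0 = x_1 = 0$ in $f$ yields $z^2 + y^5 = 0$ on $\IP(2,5) \cong \IP^1$, which cuts out a single reduced point. Since $f$ contains $y^5$ and $z^2$, $P$ avoids the two singular points of $\IP(1,1,2,5)$; since $\partial_z f = 2z \neq 0$ at $P$, $X$ is smooth there. For \refenum{ii}, the basis $\{x_0^2, x_0 x_1, x_1^2, y\}$ of $H^0(2K_X)$ factors $\phi_2$ as $X \hookrightarrow \IP(1,1,2,5) \dashrightarrow \IP(1,1,2) \xrightarrow{\text{Veronese}} Q \subset \IP^3$, where the middle projection from $(0{:}0{:}0{:}1) \notin X$ is finite of degree $2$ via $z^2 = -(y^5 + g)$. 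The branch curve in $\IP(1,1,2)$ is $\{y^5 + g = 0\}$, which maps under the Veronese to a quintic section of $Q$ avoiding $O$ (since $y^5 \neq 0$ at $(0{:}0{:}1)$); the preimage of $O$ in $X$ is the single point $P$, so $O$ must also lie in the branch locus.

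For \refenum{iii}, $H^0(3K_X)$ is spanned by six monomials in $x_0, x_1, y$ alone, all vanishing at $P$, so $P$ is the indeterminacy locus of $\phi_3$; away from $P$, $\phi_3 = \mu \circ \pi$ with $\mu\colon \IP(1,1,2) \dashrightarrow \IP^5$ the map by cubic Weil divisors, and pulling back through the minimal resolution $\IF_2 \to \IP(1,1,2)$ (contracting the $(-2)$-section $C_0$) gives the complete linear system $|C_0 + 3f|$, which by a standard scroll computation embeds $\IF_2$ as a degree-$4$ rational normal scroll of type $(1,3)$ in $\IP^5$. For \refenum{iv}, the nine basis monomials of $H^0(4K_X)$ are products of pairs from the basis of $H^0(2K_X)$, so $|4K_X|$ is the pullback via $\phi_2$ of $|\ko_Q(2)|$; the single relation $w_0 w_2 = w_1^2$ defining $Q$ cuts the $2$-Veronese of $\IP^3$ (sitting in $\IP^9$) down to $\IP^8$.

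For \refenum{v}, I check uniformly in $m \ge 5$ that $\phi_m$ is a closed embedding. By the Frobenius-number bound for $\{2, 5\}$, one can pick $(c, d) \in \mathbb{Z}_{\ge 0}^2$ with $2c + 5d = m - 1$, yielding sections $x_0 y^c z^d, x_1 y^c z^d \in H^0(mK_X)$; together with some weight-$m$ monomial $y^{c'} z^{d'}$ nonzero at $P$, this shows $|mK_X|$ is base-point free. Injectivity follows coordinate by coordinate: if $\phi_m(Q) = \phi_m(P)$ then the vanishing at $Q$ of every monomial involving $x_0$ or $x_1$ forces $Q = P$, and off $P$ the ratio $(x_0 : x_1)$ and then the values of $y, z$ are recovered from the coordinates. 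The main subtlety is the immersion at $P$: $\IP(1,1,2,5)$ is smooth at $P$ because the $\mathbb{Z}/2$-action on the chart $\{y \neq 0\}$ has trivial stabiliser at the preimage of $P$, so this chart is \'etale-locally $\mathbb{A}^3$; hence $x_0, x_1$ serve as local coordinates on $X$ at $P$, and the differentials at $P$ of $x_0 y^c z^d, x_1 y^c z^d$, after normalising by $y^{c'} z^{d'}$, are nonzero scalar multiples of $dx_0, dx_1$, spanning $T^*_P X$. The main obstacle I anticipate is this local analysis at $P$; everything else reduces to bookkeeping with the explicit canonical ring.
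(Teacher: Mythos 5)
Your proof is correct and follows essentially the same route as the paper: every assertion is read off from the canonical embedding $X\subset\IP(1,1,2,5)$ of Theorem~\ref{thm: canonical-ring}, with the pluricanonical maps factored through projections of the weighted projective space and the case $m\ge 5$ handled by monomial bookkeeping plus a local analysis at the base point $P$. You supply details for \refenum{iii} and \refenum{iv} that the paper dismisses as ``similar'', and you justify the immersion at $P$ via the smooth orbifold chart rather than via transversality of two canonical curves, but these are only cosmetic differences.
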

\begin{proof} We use the notation of Theorem \ref{thm: canonical-ring} for the generators of the canonical ring $R(K_X)$ and for the relations between them.

\refenum{i} A basis of $H^0(K_X)$ is given by $x_0, x_1$, hence the base locus of $|K_X|$ is the intersection of $X$ with $x_0=x_1=0$ and it consists just of the point $P=(0:0:-1: 1)$.  Since $K^2_X=1$ and all the canonical curves are reduced and irreducible by Lemma \ref{lem: canonical-curve}, $P$  is a simple   base  point and it is  smooth for $X$. 

\refenum{ii} Let $\pi\colon \IP(1,1,2)\into \IP^3$ be the embedding given by $|\ko(2)|$. A basis of $H^0(2K_X)$ is given by  $x_0^2, x_0x_1, x_1^2, y$, hence the bicanonical  map $\phi_2$  is  the projection from $\pp(1,1,2,5)$ to $\pp(1,1,2)$ composed with $\pi$. 
 Since  the point $(0:0:0:1)$ is not on $X$ by Theorem \ref{thm: canonical-ring}, the map $\phi_2$ is a finite degree 2 morphism.  The  branch points of $\phi_2$ different from the vertex $O=(0:0:1)\in \pp(1,1,2)$ are defined by the equation 
$f = y^5+g(x_0,x_1, y)$
  which  corresponds to a quintic section not containing $O$ in the embedding $\pi$.
The only point of $X$ that maps to  $O$ is the base point $P=(0:0:-1: 1)$
of $|K_X|$, hence $\phi_2$ is branched also on the vertex $O$.

\refenum{iii} and \refenum{iv} can be proven in a similar way.

\refenum{v} To show that the $m$-canonical map is an embedding for $m\geq 5 $ we note first that    for $m\ge 5$ the system  $|\ko_{\IP(1,1,2,5)}(m)|$ embeds the  locus $\IP(1,1,2,5)\setminus \pp(2,5)$. The surface $X$ intersects  $\pp(2, 5)$ only at the point $P=(0:0:-1: 1)$, which is the base point of the canonical system.  It is easy to check that for $m\ge 5$ the image of $\pp(2,5)$ via the map given by  $|\ko_{\IP(1,1,2,5)}(m)|$ is disjoint from the image of its complement  $\IP(1,1,2,5)\setminus \pp(2,5)$, hence $\phi_m$ is injective for $m\ge 5$. 
In addition, for every  $m\ge 5$, there exist a monomial $s$ of degree $m-1$ and a monomial $t$ of degree $m$
that do not vanish at $P$: then the $m$-canonical map  is  given locally by  $(\frac{sx_0}{t}, \frac{sx_1}{t},\dots)$ and therefore has injective differential  at $P$.
Indeed, the canonical curves defined by $x_0$ and $x_1$ intersect transversally at $P$, since they are distinct and irreducible  (Lemma \ref{lem: canonical-curve}) and $K^2_X=1$. 
\end{proof}

\begin{prop}\label{prop: pluri-chi=2}
 Let $X$ be a Gorenstein stable surface with $K_X^2=1$ and $p_g(X) = 1$. Then:
\begin{enumerate}
 \item $\phi_2\colon X\to \pp^2$ is a finite degree 4 morphism.
\item $\phi_3$ and  $\phi_4$ are  birational morphisms but do not embed.
\item $\phi_m$ is an embedding for $m\geq 5$.
\end{enumerate}
\end{prop}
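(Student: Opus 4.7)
The plan is to parallel the proof of Proposition \ref{prop: pluri-chi=3} by exploiting the canonical embedding $X \hookrightarrow \IP(1,2,2,3,3)$ from Theorem \ref{thm: canonical-ring}: each $\phi_m$ is described by the degree-$m$ monomials in $\IC[x_0, y_1, y_2, z_1, z_2]$. The main technical tool is the canonical curve $C = X \cap \{x_0 = 0\} \in |K_X|$, an integral Gorenstein curve of arithmetic genus $2$ by Lemma \ref{lem: canonical-curve}, together with the line bundle $L = K_X|_C$ of degree $1$. The short exact sequence
\[
 0 \to \ko_X((m-1)K_X) \xrightarrow{\cdot x_0} \ko_X(mK_X) \to \ko_C(mL) \to 0,
\]
combined with Kodaira vanishing \cite[Cor.~19]{liu-rollenske14}, identifies $\phi_m|_C$ with the complete linear system $|mL|$.

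For (i), take $x_0^2, y_1, y_2$ as a basis of $H^0(2K_X)$: its base locus is contained in $\IP(3,3) \cap X = \emptyset$ by Theorem \ref{thm: canonical-ring}, so $\phi_2 \colon X \to \IP^2$ is a finite morphism of degree $(2K_X)^2 = 4$. For (ii), we use the bases $x_0^3, x_0 y_1, x_0 y_2, z_1, z_2$ of $H^0(3K_X)$ and $x_0^4, x_0^2 y_1, x_0^2 y_2, y_1^2, y_1 y_2, y_2^2, x_0 z_1, x_0 z_2$ of $H^0(4K_X)$; their base loci lie in $\IP(2,2) \cup \IP(3,3)$ and thus miss $X$. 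On $\{x_0 \ne 0\} \cap X$, the ratios of these monomials recover $y_i / x_0^2$ and $z_j / x_0^3$, so both $\phi_3$ and $\phi_4$ are birational. Failure to embed is detected on $C$: $\phi_3|_C = (z_1 : z_2)$ is the degree-$3$ pencil $|3L|$, mapping $C$ to a line in $\IP^4$; and $\phi_4|_C = (y_1^2 : y_1 y_2 : y_2^2)$ factors through the degree-$2$ canonical map $|K_C|$ followed by a Veronese, with image a conic in $\IP^2$.

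For (iii), the same analysis shows that $\phi_m$ is a finite morphism for $m \ge 5$, it restricts to an open immersion on $\{x_0 \ne 0\} \cap X$, and $x_0^m$ separates this open set from $C$. The new ingredient is that for $m \ge 5 = 2g + 1$, the line bundle $mL$ is very ample on the integral Gorenstein curve $C$ (by the classical very-ampleness criterion extended to Gorenstein curves, cf.~\cite{franciosi13}), so $\phi_m|_C$ is a closed embedding. To glue the two pieces into a closed immersion of $X$, we verify tangent separation at a point $p \in C$: derivatives of pure $(y, z)$-monomial sections supply the tangent direction along $C$ via the embedding $\phi_m|_C$, while a section of the form $x_0 \cdot t$ with $t$ a degree-$(m-1)$ monomial in $y, z$ not vanishing at $p$ vanishes to first order along $C$ and hence contributes the direction normal to $C$; such a $t$ exists because $p$ lies in the smooth locus of $\IP(1,2,2,3,3)$.

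The principal obstacle, compared to the $p_g = 2$ case, is that $|\ko_{\IP(1,2,2,3,3)}(5)|$ does \emph{not} embed the smooth locus of the ambient weighted projective space: its restriction to the stratum $\{x_0 = 0\} \cong \IP(2,2,3,3)$ has $1$-dimensional fibres (it factors through a Segre-type quadric in $\IP^3$). Consequently the argument cannot be reduced to an ambient embedding statement as in Proposition~\ref{prop: pluri-chi=3}; instead the crucial input is the very ampleness of $|mL|$ on the genus-$2$ canonical curve $C$, which holds already at $m = 5 = 2g+1$.
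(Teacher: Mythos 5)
Your proposal is correct, and parts (i) and (ii) run essentially as in the paper (same monomial bases, same identification of $\phi_3|_C$ and $\phi_4|_C$ with $|3L|$ and the composite of $|K_C|$ with a Veronese, same birationality argument via the affine coordinates $y_i/x_0^2$, $z_j/x_0^3$). For part (iii), however, you take a genuinely different route. The paper observes that any length-two subscheme whose support meets $C$ and which is not split between $C$ and its complement is automatically contained in the non-reduced curve $2C$ (since $x_0^2\in\gothm_p^2\subseteq I_Z$ for $Z$ supported at $p\in C$), and then invokes the numerical very-ampleness criterion of \cite{CFHR} for $mK_X|_{2C}$, which handles points, tangent vectors along $C$, and tangent vectors transverse to $C$ all at once. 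You instead apply the degree bound $\deg mL=m\geq 2p_a(C)+1=5$ only to the reduced curve $C$ and then separate the ``normal direction'' by hand using a section $x_0t$ with $t(p)\neq 0$. Your decomposition is valid and arguably more elementary, but the normal-direction step as phrased is a smooth-point argument; to cover singular points of $X$ lying on $C$ one should state it scheme-theoretically: if $Z$ is a length-two subscheme at $p\in C$ with $Z\not\subset C$, then $Z\cap C=\{p\}$ is reduced and $x_0t$ generates the length-one kernel of $\ko_Z\to\ko_{Z\cap C}$, so surjectivity of $H^0(mK_X)\to H^0(\ko_Z(mK_X))$ follows from base-point-freeness. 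The paper's use of $2C$ packages exactly this step into the \cite{CFHR} criterion (which for $B=2C$ requires $2m\geq 2p_a(2C)+1=9$, again giving $m\geq5$). Your closing observation that $|\ko_{\IP(1,2,2,3,3)}(5)|$ fails to embed the stratum $\{x_0=0\}$ is accurate and correctly explains why, unlike in Proposition \ref{prop: pluri-chi=3}, the argument cannot be delegated entirely to the ambient weighted projective space.
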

\begin{proof}
 We use the notation of Theorem \ref{thm: canonical-ring} for the generators of the canonical ring $R(K_X)$. In particular, we have an isomorphism $H^0(\ko_{\IP(1,2,2,3,3)}(d))\isom H^0(d K_X)$ for $d\leq 5$. 

  \refenum{i} The bicanonical map  is induced by the projection $\IP(1,2,2,3,3) \dashrightarrow \IP(1,2,2)=\pp^2$. Looking at the equations of $X$ given in Theorem \ref{thm: canonical-ring}, one sees that $\phi_2$ is finite of degree four. 

\refenum{ii} The base-point-freeness of $|dK_X|$ follows by restriction from $|\ko_{\IP(1,2,2,3,3)}(d)|$, for $d\ge 3$ since $X$ does not meet the lines $\pp(2,2)$ and $\pp(3,3)$ by Theorem \ref{thm: canonical-ring}. 
The restriction of $|3K_X|$ to the canonical curve $C=\{x_0=0\}$ is spanned by $z_1$ and $z_2$, hence $\phi_3|_C$ is not birational and $\phi_3$ is not an embedding. 

To see that $\phi_3$ is birational, consider $X_0:=X\cap \{x_0\ne 0\}$ with affine coordinates $t_1=\frac{y_1}{x_0^2}, t_2=\frac{y_2}{x_0^2},t_3=\frac{z_1}{x_0^3},t_4=\frac{z_2}{x_0^3}$. The functions $t_1, \dots,  t_4$ are all pull-backs from the $3$-canonical image $\phi_3(X)$, hence $\phi_3$ is birational. 
One can argue exactly in the same way for $\phi_4$. 

\refenum{iii} Let $m\geq 5$.  We have already observed in \refenum{ii} that the $m$-canonical system  is base-point free.  Since the linear  system $|\ko_{\IP(1,2,2,3,3)}(m)|$ embeds the  complement of $\{x_0=0\}$ we only have to check that every subscheme $Z$ of length two with support intersecting the canonical curve $C$ is embedded. If $Z\subset 2C$ this follows because the restriction of the $m$-canonical system is very ample on $2C$ by the numerical criterion {\cite[Thm.~1.1]{CFHR}}.
That points on $C$ can be separated from points in the complement follows as above. 
\end{proof}

\section{Stratification of  $\overline\gothM_{1,3}^{(Gor)}$ and  beyond}\label{sec: pg=2}

In this section we examine more closely  the moduli spaces $\overline\gothM_{1,3}^{(Gor)}\subset \overline\gothM_{1,3}$.   The Gorenstein surfaces in the {boundary} $\overline\gothM_{1,3}\setminus \gothM_{1,3}$ are studied in detail. In addition, we show by explicit examples that a stable surface with $K_X^2=1$ and $\chi(X) = 3$ need neither be Gorenstein nor be canonically embedded in $\IP(1,1,2,5)$.

\subsection{The double cover construction}
In this section $X$ denotes a stable Gorenstein surface with $K^2_X=1$ and $\chi(X)=3$, or, equivalently, $p_g(X)=2$ and $q(X)=0$. 

We denote by $\kq\subset \pp^3$ the quadric cone, i.e.,  the  image of the embedding  of $\pp(1,1,2)$ given by  the system $|\OO_{\pp(1,1,2)}(2)|$, and by $O\in \kq$ the vertex.  

By Proposition \ref{prop: pluri-chi=3}, the bicanonical map is a double cover $\phi_2\colon X \to \kq$ branched on a divisor $\Delta\in |\OO_{\kq}(5)|$ such that $O\notin \Delta$. The divisor $\frac 12 \Delta$ is the so-called Hurwitz divisor of the cover (cf. \cite[Def.\ 2.4]{alexeev-pardini12}) and  $(\kq,\frac 12 \Delta)$ is  a log-canonical pair by \cite[Def.\ 2.5]{alexeev-pardini12}. 

Here we show that it is possible to reverse this construction:
\begin{prop}\label{prop: double-cover}
 Let $\Delta \in |\OO_{\kq}(5)|$ be  such that $(\kq,\frac12\Delta)$ is a log-canonical pair. Then:
\begin{enumerate}
\item There exists a unique    double cover $p\colon X\to \kq$ with Hurwitz divisor   $\frac 12 \Delta$
\item $X$ is a stable surface with $K^2_X=1$
\item the Cartier index of $X$ is equal to $1$ if $O\notin \Delta$ and it is equal to 2 otherwise
\item $X$ is normal if and only if $\Delta$ is reduced. 
\end{enumerate}
\end{prop}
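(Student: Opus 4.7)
The plan is to realise $X$ as a cyclic double cover of $\kq = \IP(1,1,2)$, the identification coming from the embedding given by $|\OO_{\IP(1,1,2)}(2)|$. Under this identification $\OO_\kq(5)$ corresponds to $\OO_{\IP(1,1,2)}(10)$, and since the class group $\mathrm{Cl}(\IP(1,1,2))=\IZ$ is torsion-free it admits a unique square root $L:=\OO_{\IP(1,1,2)}(5)$.

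For \refenum{i} I would apply the standard double cover construction (allowing $L$ to be a divisorial sheaf; equivalently, build the cover on the underlying smooth toric stack and pass to the coarse moduli space) to the section of $L^{\otimes 2}$ cutting out $\Delta$. Uniqueness of the cover then reflects the uniqueness of the square root $L$. For \refenum{ii}, the Hurwitz formula $K_X = p^*(K_{\IP(1,1,2)} + L) = p^*\OO(-4+5) = p^*\OO(1)$ gives $K_X^2 = 2\cdot\OO(1)^2 = 1$, and ampleness is immediate because $p$ is finite and $\OO(1)$ is ample. The slc property of $X$ follows from the log-canonical hypothesis on $(\kq,\tfrac12\Delta)$ via the Alexeev--Pardini correspondence between $\IQ$-Gorenstein double covers and log pairs (cf.\ \cite{alexeev-pardini12}).

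For \refenum{iv} I would use the familiar local model $X = \mathrm{Spec}\,R[t]/(t^2 - f)$ with $f$ a local equation of $\Delta$: this ring is normal exactly when $f$ is squarefree, i.e.\ when $\Delta$ is reduced. For \refenum{iii}, observe first that $2K_X = p^*\OO(2)$ is Cartier so the Cartier index of $X$ divides two. The delicate point is the vertex $O$, where $\kq$ has an $A_1$ singularity whose local class group is $\IZ/2$ and $L$ represents its non-trivial element. If $O\notin\Delta$ then $p$ is étale over a neighbourhood of $O$; the non-trivial étale double cover of a punctured $A_1$ singularity is smooth, so $p^{-1}(O)$ is a single smooth point of $X$ where $K_X$ is automatically Cartier, and outside $O$ the class $\OO(1)$ is already Cartier, giving index one. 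If instead $O\in\Delta$ the preimage is a single singular point of $X$, and a direct local computation in its local ring should show that $K_X$ is not Cartier, forcing index exactly two.

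The main obstacle will be making the slc claim in \refenum{ii} precise: classical Esnault--Viehweg double-cover theory requires a Cartier line bundle over a smooth base, whereas here $L$ is only divisorial and $\kq$ is itself singular, so one must genuinely invoke the refined framework of \cite{alexeev-pardini12}. The local calculation for \refenum{iii} in the case $O\in\Delta$ is a secondary but genuine verification rather than something that falls out formally from \refenum{ii}.
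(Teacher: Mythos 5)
Your construction of the cover is essentially the paper's: forming the reflexive (divisorial-sheaf) double cover attached to the square root $\OO_{\IP(1,1,2)}(5)$ of $\OO(10)$ amounts to the same thing as the paper's procedure of building the flat double cover over $\kq_0=\kq\setminus\{O\}$, where $\Pic(\kq_0)\cong\IZ$ is generated by a ruling $f$ and $\Delta_0\sim 10f$, and then passing to the $S_2$-closure; the uniqueness arguments (torsion-freeness of the relevant Picard/class group) also coincide. Parts \refenum{ii}, \refenum{iv} and the case $O\notin\Delta$ of \refenum{iii} run as in the paper: in particular your remark that $L$ restricts to the non-trivial element of the local class group at the $A_1$ point is the paper's observation that $O$ always lies in the branch locus because a ruling meets $\Delta_0$ in an odd number of points, and the smoothness of the normalised non-trivial double cover of the punctured $A_1$ point then gives the Gorenstein property over $O$.

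The one genuine gap is the case $O\in\Delta$ of \refenum{iii}, which you defer to ``a direct local computation in its local ring''. That computation is not entirely routine: near $O$ the sheaf $L$ is not Cartier, so the cover is not of the simple form $t^2=f$ there, and one has to analyse the $S_2$-closure of a cover of the punctured $A_1$ point branched along the pullback of $\Delta$. The paper avoids this entirely by a global argument: since $p_*\OO_X=\OO_\kq\oplus\OO_{\IP(1,1,2)}(-5)$ forces $p_g(X)=2$ and $\chi(\OO_X)=3$, if $X$ were Gorenstein then Proposition~\ref{prop: pluri-chi=3}\refenum{ii} (resting on the canonical ring computation of Theorem~\ref{thm: canonical-ring}) would identify $p$ with the bicanonical map and exhibit its branch divisor as a quintic section \emph{not} containing $O$, contradicting $O\in\Delta$. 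Your local route can certainly be made to work, but as written it is an unproved assertion; either carry out that local analysis explicitly or substitute the paper's appeal to the structure of the bicanonical map of a Gorenstein stable surface with $K_X^2=1$ and $p_g=2$.
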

\begin{proof}
\refenum{i} We write $\kq_0=\kq\setminus\{O\}$ and we denote by $\Delta_0$ the restriction of $\Delta$ to $\kq_0$.  Since $\Pic(\kq_0)=\IZ f$, where $f$ is the class obtained  by restricting a ruling of $\kq$, the divisor  $\Delta_0$ is linearly equivalent to $10f$, hence there exists a flat double cover $X_0\to \kq_0$ branched on $\Delta_0$.  The surface $X_0$ is demi-normal, since,  by definition of log pair,  all components of $\Delta$ have multiplicity $\le 2$. By taking the $S_2$-closure (cf. \cite[Lem.\ 1.2]{alexeev-pardini12}) one obtains a demi-normal double cover $p\colon X\to \kq$ with Hurwitz divisor $\frac 12 \Delta$. 

The cover $p$ is unique,  since it  is determined by its restriction to $\kq_0$ and $\Pic(\kq_0)$ has no torsion. 

\refenum{ii}, \refenum{iii}  One has $2K_X=p^*(2K_{\kq}+\Delta)=p^*(\OO_{\kq}(1))$, hence the Cartier index of $X$ is at most 2, $K_X$ is ample and $K_X^2=1$. 
Notice that $K_{X_0}$ is Cartier by construction, hence to decide whether $K_X$ is Cartier it is enough to examine the point $O$. Notice also that  $O$ is always in the branch locus of $p$, since a ruling of $\kq$ intersects  $\Delta_0$ in $5$ points and $5$ is an odd number. 
If $O\notin \Delta$, then $X\to \kq$ is normal over $\Delta$, hence it is smooth there and therefore $X$ is Gorenstein. The converse follows by Proposition \ref{prop: pluri-chi=3}, \refenum{ii}.

\refenum{iv} The surface $X$, being $S_2$ by construction, is normal if and only if it is smooth in codimension 1, if and only if $X_0$ is smooth in codimension 1, if and only if $\Delta$ is reduced. 
\end{proof}

\begin{rem}
As it will be apparent in the sequel, the utility of  Proposition \ref{prop: double-cover} lies mainly in the fact    that it reduces the analysis of the singularities of $X$ to the study of the curve $\Delta\subset \kq$. However the construction of $X$ as a double cover of $\kq$ is not easy to perform  in families, since it involves taking the $S_2$-closure of a cover of $\kq_0$. This difficulty can be avoided by using Theorem \ref{thm: canonical-ring}, \refenum{i}: in the notation there, the divisor $\Delta\subset \pp(1,1,2)$ is given by the term
$y^5+g(x_0,x_1, y)$
in the equation $f$ of $X\subset \pp(1,1,2,5)$, hence it is immediate   to construct a flat family having as (repeated) fibres  all the surfaces $X$ constructed as in Proposition \ref{prop: double-cover}.
\end{rem}

\subsection{Stratification of $\overline\gothM_{1,3}^{(Gor)}\setminus \gothM_{1,3}$}\label{ssec:strata}
We now describe more precisely $\overline\gothM_{1,3}^{(Gor)}\setminus \gothM_{1,3}$, namely we consider  ramified covers of $\kq$ as above where the Hurwitz divisor $\Delta$ does not contain the vertex.

If  $P\in \Delta$  is an isolated singularity, then one of the following occurs:
\begin{itemize}
\item $P$ is a {\em negligible} singularity:  $P$ is a double point or a triple point such that every point infinitely near to $P$ is at most double for the strict transform of $\Delta$. 
The preimage of $P$ in $X$ is a canonical singularity. 
\item $P$ is a quadruple point such that every point infinitely near to $P$ is at most double for the strict transform of $\Delta$. The preimage of $P$ in $X$ is an elliptic Gorenstein singularity of degree $2$.
\item $P$ is a $[3,3]$-point, namely $P$ is a triple point with an infinitely near triple point $P_1$ and all the points infinitely near to $P_1$ are at most double. The preimage of $P$ in $X$ is an elliptic Gorenstein singularity of degree $1$.
\end{itemize}
This can be checked by blowing up and analysing the cases  similar to what is done in \cite{liu-rollenske12a} or from the point of view of log-canonical threshold \cite[6.5]{Corti-Kollar-Smith}.

 We consider the (open) strata
\[ \gothN_{d_1, \dots, d_k} = \left\{X\in \overline\gothM_{1,3}^{(Gor)}
\left|\,\text{\begin{minipage}{.45\textwidth}
   $X$ is normal and has exactly $k$ elliptic singularites of degree $d_1\leq \dots\leq  d_k$ 
            \end{minipage}}\right.
 \right\}.
\]

\begin{table}[htb!]\caption{Irreducible  strata of normal surfaces in $\overline\gothM_{1,3}^{(Gor)}$}\label{tab: normal strata}
\begin{center}
 \begin{tabular}{cclc}
 \toprule
stratum & dimension & minimal resolution $\tilde X$ & $\kappa(\tilde X)$\\
\midrule
$\gothN_\varnothing=\gothM_{1,3}$& $28$ & general type & $2$\\  
$\gothN_2$ & $20$  & blow up of a K3-surface & $0$\\
$\gothN_1$ & $19$  & minimal elliptic surface with $\chi(\tilde X)=2$ & $1$\\
$\gothN_{2,2}$ & $12$  & rational surface& $-\infty$\\
$\gothN_{1,2}$ & $11$ & rational surface& $-\infty$\\
$\gothN_{1,1}^R$ & $10$ &rational surface& $-\infty$\\
$\gothN_{1,1}^E$ & $10$ &blow up of an Enriques surface & $0$\\
$\gothN_{1,1,2}$ & $2$ & ruled surface with $\chi(\tilde X)=0$& $-\infty$\\
$\gothN_{1,1,1}$ & $1$  & ruled surface with $\chi(\tilde X)=0$& $-\infty$\\
\bottomrule
\end{tabular}

\end{center}
\end{table}

\begin{prop}\label{prop: normal strata}
 The subset of $\overline\gothM_{1,3}^{(Gor)}$ parametrizing normal surfaces is stratified by the  irreducible and unirational strata $\gothN_{d_1, \dots, d_k}$
 given in Table \ref{tab: normal strata}.	
\end{prop}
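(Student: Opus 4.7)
The plan is to apply Proposition \ref{prop: double-cover} to translate the classification of normal surfaces in $\overline\gothM_{1,3}^{(Gor)}$ into the classification of their bicanonical branch curves, and then to study each stratum via the resulting families of quintics on the quadric cone. Concretely, Propositions \ref{prop: pluri-chi=3} and \ref{prop: double-cover} together associate to $X$ the Hurwitz curve $\Delta$ of $\phi_2\colon X\to\kq$ and set up a bijection between isomorphism classes of normal Gorenstein stable surfaces with $K_X^2=1$, $\chi(X)=3$ and $\Aut(\kq)$-orbits of reduced quintic sections $\Delta\in|\OO_\kq(5)|$ with $O\notin\Delta$ such that $(\kq,\tfrac12\Delta)$ is log canonical. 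The local dictionary recalled before Table \ref{tab: normal strata} identifies the stratum label $(d_1,\dots,d_k)$ with the combinatorial type $(a,b)$ of $\Delta$, where $a$ counts the ordinary quadruple points (elliptic degree $2$ on $X$) and $b$ counts the $[3,3]$-points (elliptic degree $1$ on $X$); negligible singularities of $\Delta$ only produce canonical singularities on $X$ and do not enter the label.

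For irreducibility, unirationality and dimension, I parametrize each stratum as an open subvariety of an explicit fibration. Let $\mathcal{C}_{a,b}\subset\Sym^a(\kq_0)\times\Sym^b(\IP(T_{\kq_0}))$ be the open locus of configurations with pairwise distinct base points; it is smooth, irreducible and rational of dimension $2a+3b$. Over $\mathcal{C}_{a,b}$ I take the linear subsystem $\Lambda\subset|\OO_\kq(5)|$ of quintics with a quadruple point at each of the $a$ marked points and a $[3,3]$-singularity in the prescribed direction at each flagged point. A direct Taylor-coefficient computation, combined with one blow-up to translate the infinitely-near condition in the $[3,3]$-case as in the analysis preceding the proposition, shows that a quadruple point imposes $\binom{5}{2}=10$ independent conditions and a $[3,3]$-point imposes $6+3+3=12$. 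Thus $\dim\Lambda=35-10a-12b$ and the resulting total space is irreducible and unirational of dimension $35-8a-9b$. Modding out by the free action of $\Aut(\kq)$ on the generic point (of dimension $7$) and restricting to the locus where $\Delta$ has no worse singularities yields
\[
\dim\gothN_{d_1,\dots,d_k}=28-8a-9b,
\]
an irreducible unirational variety matching every row of the table.

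For the minimal resolution $\pi\colon\tilde X\to X$, I simultaneously resolve the vertex of $\kq$ and the non-negligible singularities of $\Delta$ on the Hirzebruch surface $\IF_2$, obtaining a smooth $S$ with smooth proper transform $\tilde\Delta$; the natural double cover of $S$ branched along $\tilde\Delta$ together with $C_\infty$ is smooth, and contracting the $(-1)$-curves produced over $O$ and over the canonical singularities arising from the negligible part gives $\tilde X$. Standard double-cover formulae yield $K_{\tilde X}^2=1-\sum d_i$ and $\chi(\OO_{\tilde X})=3-k$; combined with the effective pluricanonical information pulled back from Theorem \ref{thm: canonical-ring} and with the ruling $\kq\to\IP^1$, this identifies the minimal model in each case: the elliptic exceptional curve over a quadruple point has self-intersection $-2$, whose contraction exhibits a K3 surface for $\gothN_2$; the ruling induces a genus-one fibration with $\chi=2$ for $\gothN_1$; for $k=2,3$ the induced pencil of rational (respectively elliptic) curves on $\tilde X$ shows that the minimal model is rational or elliptic-ruled.

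The main obstacle is the split $\gothN_{1,1}=\gothN_{1,1}^R\sqcup\gothN_{1,1}^E$, where both sub-strata have identical combinatorial data on $\Delta$ and identical numerical invariants on $\tilde X$ ($K^2=-1$, $p_g=q=0$, $\chi=1$), so the uniform construction above does not distinguish them. The distinction is that for $\gothN_{1,1}^E$ the resolution $\tilde X$ admits a fixed-point-free involution beyond the bicanonical involution, so that its minimal model is an Enriques surface rather than a rational one. I expect the delicate point of the argument to be showing that the generic configurations on each side give two distinct irreducible components of the same dimension $10$ (rather than a single irreducible family); this should be accomplished by producing explicitly a symmetric configuration of two $[3,3]$-points realizing $\gothN_{1,1}^E$, showing that this condition is open in its component, and then checking separately that each component is irreducible and unirational of dimension $10$.
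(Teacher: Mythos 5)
Your overall strategy -- translating to branch quintics on $\kq$ via Proposition \ref{prop: double-cover}, counting the conditions imposed by quadruple and $[3,3]$-points ($10$ and $12$, matching the paper), and dividing by $\Aut(\kq)$ -- is the same as the paper's, and your numbers agree with Table \ref{tab: normal strata} in every row. However, the derivation via the fibration over the configuration space $\mathcal{C}_{a,b}$, with the blanket claim that the conditions are independent and that the generic member of the resulting linear system is reduced with exactly the prescribed singularities, breaks down exactly where the proof becomes nontrivial. For $\gothN_{1,1,1}$ one has $3\cdot 12=36>35$, so if the conditions were independent the stratum would be \emph{empty}; it is nonempty only because independence fails on the locus of configurations where two pairs of $[3,3]$-points have a matching tangent hyperplane (i.e.\ a hyperplane section of $\kq$ tangent to both prescribed directions, which is then forced to be a component of $\Delta$). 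Similarly for $\gothN_{1,1,2}$: over the generic flagged configuration the linear system is a pencil containing \emph{no} reduced curve, and the stratum lives only over the matching-tangent locus, where the fiber dimension jumps. Your formula $28-8a-9b$ still returns the right answers because the codimension of the matching locus exactly cancels the excess fiber dimension, but that is an observation, not a proof, and the parametrizing variety you write down is not the one that dominates these strata. The same phenomenon is responsible for the split $\gothN_{1,1}=\gothN_{1,1}^R\sqcup\gothN_{1,1}^E$, which you correctly flag as delicate; the paper's criterion is precisely whether the two tangent hyperplanes match (forcing $H\subset\Delta$), and the two cases are distinguished by computing $2K_{\tilde X}$ on the resolution rather than by exhibiting an extra involution.

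Two further points are missing. First, completeness of the table: you never rule out the other a priori possible labels (e.g.\ $\gothN_{2,2,2}$ or $\gothN_{1,2,2}$). The paper bounds the number of elliptic singularities by $3$ using \cite[Thm.~4.1]{FPR15a} and then excludes two quadruple points together with any third non-negligible point by intersecting $\Delta$ with the hyperplane section through the marked points ($H\cdot\Delta$ and $H\cdot(\Delta-H)$ become too large, forcing $2H\le\Delta$, which is not lc). Second, nonemptiness: for each stratum one must actually produce a reduced $\Delta$ in the prescribed linear system whose pair $(\kq,\tfrac12\Delta)$ is lc with exactly the listed singularities. This is the substantive content of the proof -- the paper resorts to explicit equations and Macaulay~2 computations for several strata -- and "restricting to the open locus where $\Delta$ has no worse singularities" silently assumes that this locus is nonempty, which is exactly what needs to be shown (and is false over the generic configuration for the three-point strata).
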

\begin{proof}
We have explained above that it is sufficient to control the singularities of the branch divisor $\Delta\subset \kq$. For convenience we  work in $\IP(1,1,2)$ with coordinates $x_0, x_1, y$. Recall that the branch divisor is given by a polynomial in $H^0( \ko_{\IP(1,1,2)}(10))$, which is of  dimension $36$.  Note that any automorphism of $\IP(1,1,2)$ is of the form $(x_0, x_1, y)\mapsto (ax_0+bx_1, cx_0+dx_1, ey+q(x_0, x_1))$ where $ad-bc\neq 0$,  $e\in \IC^*$ and $q$ is a quadratic polynomial; fixing the degree 2 coordinate $y$ up to a multiple corresponds in the embedding in $\IP^3$ to fixing a hyperplane section not containing the vertex.

For each potential stratum  we first use the automorphim group to fix the the position of non-negligible singular points as far as possible and then impose singularities at these points, that is, look at $H^0(\ki(10))$ for an appropriate ideal sheaf $\ki$. For example, for $\gothN_2$ the ideal sheaf is $\ki= (x_1, y)^4$. Then three steps are needed to conclude:
\begin{enumerate}
 \item Compute the dimension of the automorphism group fixing the given configuration.
 \item Check how many conditions are imposed by the singularites, i.e., compute $h^0(\ki(10))$.
 \item Show that there is a reduced curve $\Delta$ in the linear system induced by $H^0(\ki(10))$ such that the pair $(\IP(1,1,2),\frac 1 2 \Delta)$ is slc and has exactly the prescribed singularities. 
 \end{enumerate}
The first is elementary. The second could be achieved either by explict computation in a computer algebra system or, in most cases, by showing $H^1(\ki(10))=0$ via Kawamata-Viehweg vanishing on a blow up. The third however requires the construction of examples, which for several strata required the computation of a basis for $H^0(\ki(10))$. Therefore we used the computer systematically also to determine $h^0(\ki(10))$ and do  not provide abstract arguments even in the cases where this is easily possible.  All computations have been carried out  using the computer algebra system Macaulay 2 \cite {M2}; a file containing the commented code is included in the arxiv source code.

The list of examples is given after the proof, so in the following we only address the first two questions. 
In each case the structure of the minimal resolution  follows from \cite[Thm.\ 4.1]{FPR15a} combined with \cite[Lem.\ 4.3]{FPR15a}) and $\chi(X)=3$, except for a surface in $\gothN_{1,1}$, which a priori could either be rational or an Enriques surface. We will see below that both cases occur.

Note that by \cite[Thm.\ 4.1]{FPR15a} there are no surfaces with more that $3$ elliptic singularities. Therefore we have to investigate branch curves with at most three  non-negligible singular points.

First of all let us consider $\gothN_1$, i.e., 
 assume that $\Delta$ has a point $[3,3]$ at $P=(1:0:0)$. The ruling through $P$ cannot be tangent to $\Delta$ in $P$ because otherwise it has to be contained in $\Delta$ which is excluded by Proposition \ref{prop: double-cover}. Thus we may assume that $\Delta$ is tangent to $H=\{y=0\}$ at $P$. A point $[3,3]$ with given tangent imposes $12$ independent conditions. Thus $\gothN_1$ is dominated by an open subset of a $23$-dimensional linear subspace of $\IP^{35}$ and thus it is unirational and irreducible of dimension $19$, because the subgroup of automorphisms fixing a point and a tangent direction is of dimension $4$.

Next let us  look at $\gothN_2$, i.e., assume that $\Delta$ has an ordinary quadruple point at $P$. This imposes $10$ independent conditions and thus $\gothN_2$ is dominated by an open subset of a $\IP^{25}$. The subgroup of automorphisms fixing a point has dimension $5$ and thus we get a unirational irreducible component of dimension $20$.

For the strata with two elliptic singularities we choose coordinates such that the corresponding singularities of $\Delta$ are at $P=(1:0:0)$ and $Q=(0:1:0)$. In the presence  of one point of type $[3,3]$ at $P$ we arrange in addition that $\Delta$ is tangent to $H=\{y=0\}$ at $P$. If $Q$ is of type $[3,3]$ as well then there are two cases: 
\begin{itemize}
 \item[(R)] $\Delta$ is not tangent to $H$ at $Q$. Then we can use a further automorphism to fix the tangent to $\Delta$ at $Q$, which has to be different from $\{y=0\}$. 
 \item[(E)] $\Delta$ is tangent to $H$ at $Q$. Then intersecting $\Delta$ and $H$ shows that $H$ is actually contained in $\Delta$. In this case we say that the two $[3,3]$ points have a matching tangent hyperplane.
\end{itemize}
We decompose $\gothN_{1,1}=\gothN_{1,1}^R\cup\gothN^E_{1,1}$ according to these two cases. It is straightforward to compute a bicanonical divisor of the minimal resolution of $X\in \gothN_{1,1}$ and to conclude that we get as a minimal model an  Enriques surface if $X$ is in $\gothN_{1,1}^E$ and a rational surface if $X$ is in $\gothN_{1,1}^R$.

The explicit computations show that two elliptic singularities impose independent conditions, unless we are in case $E$ where we have one condition less than expected.

The dimension of the group of automorphisms fixing $P$, $Q$ and a tangent direction at each point is equal to 1, thus $\gothN^R_{1,1}$ is unirational of dimension $10=35-2\cdot 12-1$. 

The dimension of the group of automorphisms fixing $P$, $Q$ and the hyperplane section $H$ is 2, thus $\gothN^E_{1,1}$ is unirational of dimension $10=35-2\cdot 12+1-2$, because the two $[3,3]$-points with matching tangent hyperplane impose only 23 conditions.

For the study of the stratum $\gothN_{1,2}$ we can adopt the same argument. To compute the dimension, since we do not need to fix the tangent direction at $Q$, we get  $\dim(\gothN_{1,2})= 35-12-10-2 = 11$. 

For the stratum  $\gothN_{2,2}$ with two quadruple points   we do not need to fix the tangent directions at all and compute $\dim \gothN_{2,2} = 35-2\cdot 10 - 3 = 12$.

To treat the remaining cases with three singular points, assume that we have a surface with three elliptic singularities. We choose coordinates such that the corresponding singularities of $\Delta$ are at $P$, $Q$, and $R=(1:1:0)$. If $\Delta$ has two quadruple points at $P$ and $Q$ then an additional quadruple point or  $[3,3]$ point at $R$ forces $\Delta$ to contain twice the hyperplane section $H$, because $H\cdot\Delta$ and $H\cdot(\Delta-H)$ would otherwise be too big; this is impossible. 

So we may assume that $\Delta$ is reduced and has $[3,3]$-points at $P$ and $Q$ and a point of multiplicity at least 3 at $R$.  If $H= \{y=0\}$ is tangent to $\Delta$ in $P$ then it is contained in $\Delta$, since otherwise  one would have $H\cdot\Delta>10$. Computing again  $H\cdot (\Delta-H)$ we  conclude that $2H\leq \Delta$, a contradiction. Thus the tangent in a  $[3,3]$-point is  neither $H$ nor a ruling of the cone. Note that the group of automorphisms of $\IP(1,1,2)$ fixing $P$, $Q$,  $R$,  and a tangent direction distinct from $H$  at $P$ is trivial. 

For $\gothN_{1,1,2}$ parametrize the tangent at $Q$ with a parameter $t$ and let $\ki_t$ be the ideal imposing a quadruple point at $R$, a $[3,3]$-point at $P$ with a fixed tangent and a $[3,3]$-point at $Q$, such that the infinitely near triple point is the chosen tangent direction, and let $V_t$ be the linear system induced by $H^0(\ki(10))$.
Then for $t$ general $V_t$ is a pencil, which however does not contain a reduced curve. Only when the two $[3,3]$-points have a matching tangent hyperplane the conditions imposed  are no longer independent and  we get a 2-dimensional linear system whose general member has the correct type of singularities.

Now consider $\gothN_{1,1,1}$. As long as no two of the $[3,3]$-points have a matching tangent hyperplane the conditions imposed by the singularities are independent and thus cannot be satisfied. In fact, it turns out that the only case in which there is a reduced curve satisfying the conditions is when two pairs of points have matching tangent hyperplanes, which gives us a pencil of solutions. 

\end{proof}
We conclude the discussion of normal Gorenstein degenerations  by giving an example in each  stratum. 
\begin{description}
 \item[Example in $\gothN_2$] An example with exactly one elliptic singularity of degree $2$ and some negligible singularities is given by $\Delta = H_1+H_2+H_3+H_4+H_5$ where $H_i$ are general hyperplane sections such that $H_1, \dots, H_4$ have a common point of intersection. 
 
 One can see the corresponding K3 surface in the following way. Consider a plane sextic $C$ which is the sum of four general lines and a conic. Fix a general point $P$ on the conic and let $L$ be the tangent in $P$. Let $S$ be the singular  K3 surface obtained as a double cover branched over $C$. The strict transform of $L$ is an elliptic curve with a node at the preimage of $P$. Blowing up the node and contracting the elliptic curve gives $X$. Alternatively, blow up twice at $P$ to separate $L$ and $C$ and then contract the strict transform of $L$, a $(-1)$-curve, to a smooth point $R$ and the exceptional $(-2)$ curve to an $A_1$ point. The result is the quadric cone $\kq$, and the strict transform of $C$ has a quadruple point at $R$. 
\item[Example in $\gothN_1$] An example with exactly one elliptic singularity of degree $1$ and some negligible singularities is given by $\Delta = H_1+H_2+H_3+C$ where the  $H_i$ are general hyperplane sections which are tangent at a point $P$ and $C$ is a general quadric section.

 Indeed, the pencil of hyperplane sections that are tangent to $\Delta$ at $P$ gives rise to a pencil of elliptic curves on $\tilde X$. Using the canonical bundle formula one can check that the elliptic fibration $\tilde X\to \IP^1$ has a unique multiple fibre of multiplicity two, which corresponds to twice the ruling through $P$ and the exceptional elliptic curve $E$ is a two-section ot the fibration.

 \item[Example in $\gothN_{2,2}$] An example with exactly two elliptic singularities of degree $2$ and some negligible singularities is given by $\Delta = H_1+H_2+H_3+H_4+H_5$ where the $H_1, \dots, H_4$ are general hyperplane sections through two points $P$ and $Q$ and $H_5$ is a hyperplane section not containing these points.
 
 The pencil of hyperplane section through $P$ and $Q$ pulls back to a pencil of rational curves on $X$.
 \item[Example in $\gothN_{1,2}$] An example with exactly two elliptic singularities, one of degree $1$ and one of degree $2$ is given by $z^2+y^5+x_1^4(x_0^6+y^3)+2y^4x_0^2=0$ in $\IP(1,1,2,5)$. Indeed,  the local equations   of the branch divisor at $P$, resp.\ $Q$, are $x_1^4(1+y^3)+2y^4(1+y)$, an ordinary quadruple point, resp.\ $x_0^6+y^3(1+y^2+yx_0^2)$, which is a point of type $[3,3]$. One can check that these are the only singular points. 
\item[Example in $\gothN^E_{1,1}$]  An example with exactly two elliptic singularities of degree $1$ and some negligible singularities is given by $\Delta = H_1+H_2+H_3+H_4+H_5$ where the $H_i$ are general hyperplane sections such that $H_1, H_2, H_3$ are tangent at a point $P$ and $H_3, H_4, H_5$ are tangent at a point $Q\neq P$. One can check that on the minimal resolution ${\tilde X}$ the bicanonical divisor  $2K_{\tilde X}$ is linearly equivalent to the strict transform of $p^* H_3$  which is twice a $(-1)$-curve. The minimal model thus has trivial bicanonical bundle and is an Enriques surface.

\item[Example in $\gothN^R_{1,1}$] Since the tangent directions of the two $[3,3]$ points do not match there is a certain genericity to the branch divisor, so we only could find a fairly complicated equation for $\Delta$:
\begin{multline*}
y^{5}+\left(5{x}_{0}^{2}+2 {x}_{0} {x}_{1}\right) y^{4}
+\left(19 {x}_{0}^{3} {x}_{1}+{x}_{0}^{2} {x}_{1}^{2} -{x}_{0} {x}_{1}^{3} \right)y^{3}
+\left(4 {x}_{0}^{4} {x}_{1}^{2}-3 {x}_{0}^{2} {x}_{1}^{4} \right)y^{2}\\\
-3 {x}_{0}^{3} {x}_{1}^{5} y
-{x}_{0}^{4} {x}_{1}^{6}
=0
\end{multline*}
 \begin{figure}\caption{Branch locus and surface in $\gothN_{1,1}^R$}\label{fig: N11R}
\includegraphics[width=4cm]{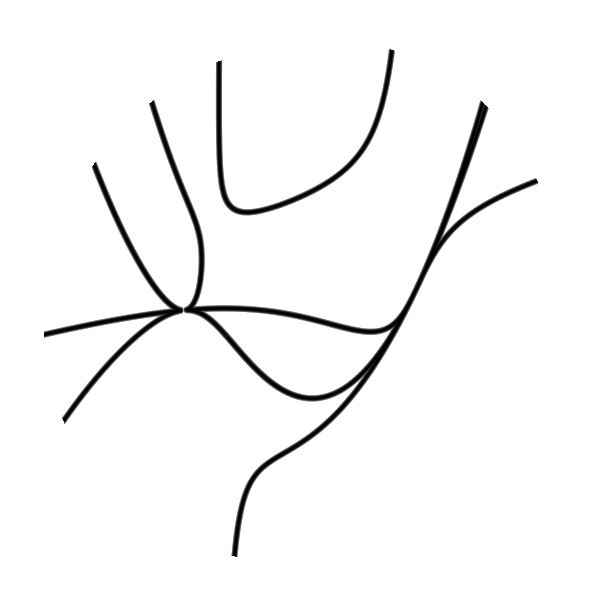}
\includegraphics[width=4cm]{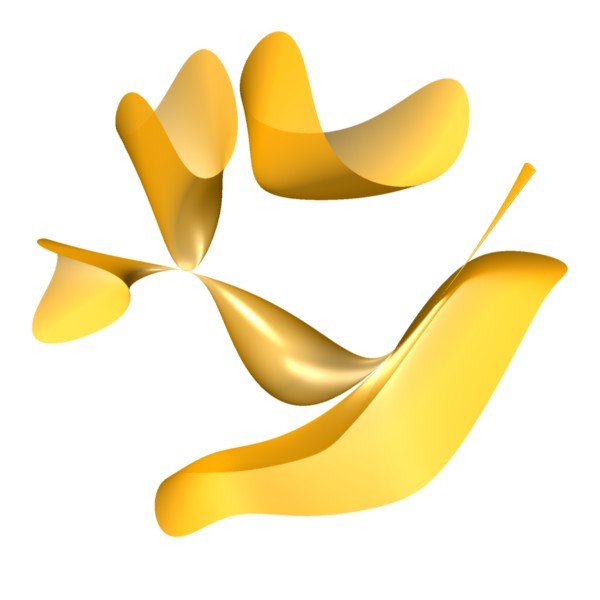}
\end{figure}

 \item[Example in $\gothN_{1,1,2}$] 
The branch divisor contains the hyperplane section tangent to the two $[3,3]$ points $P$ and $Q$. For a particular choice of this hyperplane the equation is a product 
 \[({x}_{0} {x}_{1}+y)(\alpha y^4+\beta f+\gamma g)=0\]
 where
 {\begin{gather*}    
   f={x}_{0}^{3} {x}_{1} y^{2}-2 {x}_{0}^{2} {x}_{1}^{2} y^{2}+{x}_{0} {x}_{1}^{3} y^{2}+{x}_{0}^{2} y^{3}-2 {x}_{0} {x}_{1} y^{3}+{x}_{1}^{2} y^{3}\\
   g=  {x}_{0}^{6} {x}_{1}^{2}-4 {x}_{0}^{5} {x}_{1}^{3}+6 {x}_{0}^{4} {x}_{1}^{4}-4 {x}_{0}^{3} {x}_{1}^{5}+{x}_{0}^{2} {x}_{1}^{6}+2 {x}_{0}^{5} {x}_{1} y-8 {x}_{0}^{4} {x}_{1}^{2} 
   y  +  12{x}_{0}^{3} {x}_{1}^{3} y\\
   -8{x}_{0}^{2} {x}_{1}^{4} y+2 {x}_{0} {x}_{1}^{5} y+{x}_{0}^{4} y^{2}-2 {x}_{0}^{2} {x}_{1}^{2} y^{2}+{x}_{1}^{4} y^{2}+4 {x}_{0}^{2} y^{3}-8 {x}_{0} {x}_{1} y^{3}+4 {x}_{1}^{2} y^{3}
 \end{gather*}
}
For a general choice of parameters, e.g. $\alpha=\beta=\gamma = 1$,  the curve has exactly the required singularities.

\begin{figure}\caption{Branch locus and surface in $\gothN_{1,1,2}$}\label{fig: N112}
 \includegraphics[width=4cm]{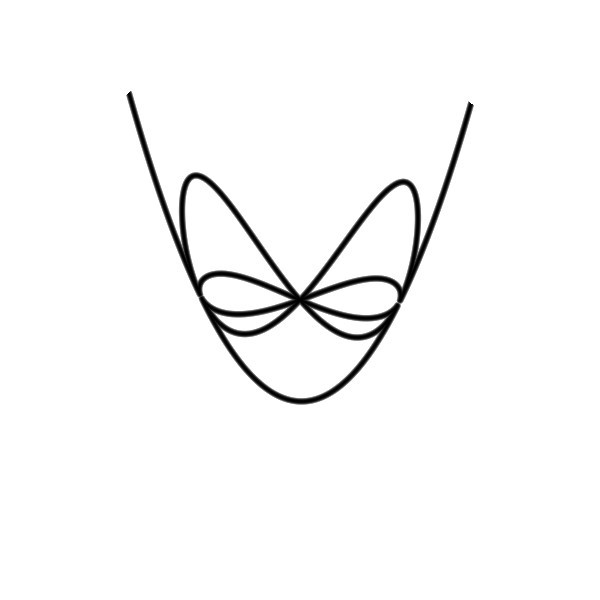}
\includegraphics[width=4cm]{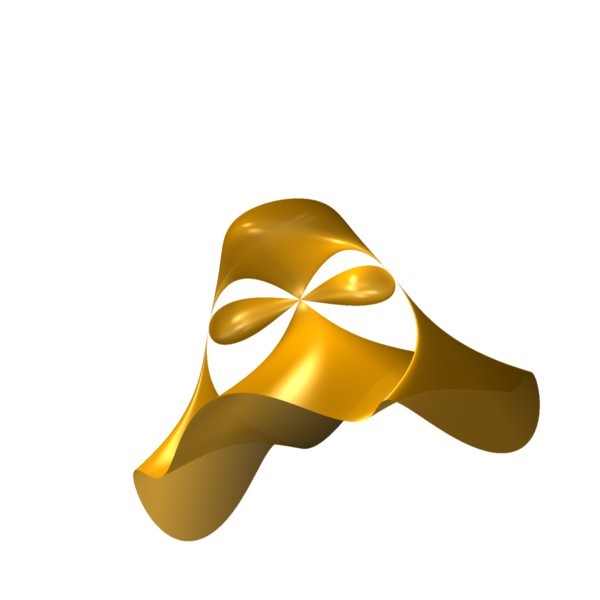}
\end{figure} 
 
 \item[Example in $\gothN_{1,1,1}$]
The branch divisor contains two hyperplane sections which each pass through two of the three $[3,3]$-points. Fixing those two with equations ${x}_{0} {x}_{1}+y$ and ${x}_{0} {x}_{1}-{x}_{1}^{2}+y$, there is a pencil of cubic sections of the cone generated by 
\[f = \left({{x}_{0}^{2} {x}_{1}-{x}_{0} {x}_{1}^{2}+2 {x}_{0} y-{x}_{1} y}\right)^{2}\]
and
\begin{multline*}
g={x}_{0}^{5} {x}_{1}+{x}_{0}^{4}
      {x}_{1}^{2}-5 {x}_{0}^{3} {x}_{1}^{3}+3 {x}_{0}^{2} {x}_{1}^{4}+{x}_{0}^{4} y+12 {x}_{0}^{3} {x}_{1} y
      -19 {x}_{0}^{2} {x}_{1}^{2} y+6 {x}_{0} {x}_{1}^{3} y\\
      +14 {x}_{0}^{2} y^{2}
      -13 {x}_{0} {x}_{1} y^{2}+3
      {x}_{1}^{2} y^{2}+y^{3}, 
\end{multline*}

whose general member has exactly the required singularities and tangencies.  
\begin{figure}\caption{Branch locus and surface in $\gothN_{1,1,1}$}\label{fig: N111}
\includegraphics[width=4cm]{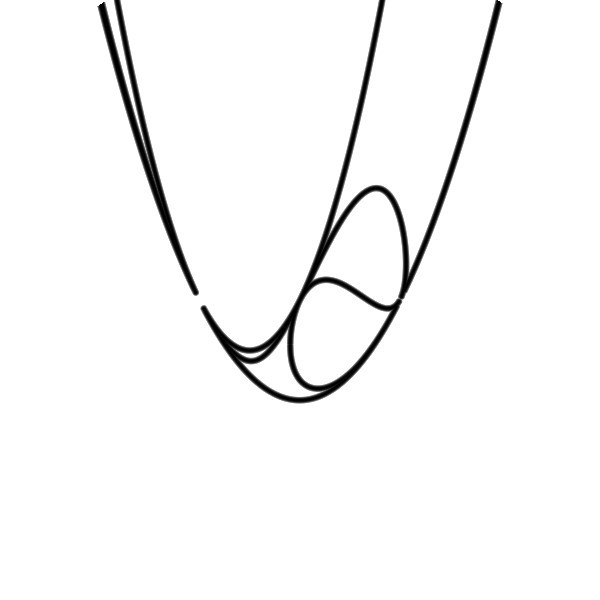}
\includegraphics[width=4cm]{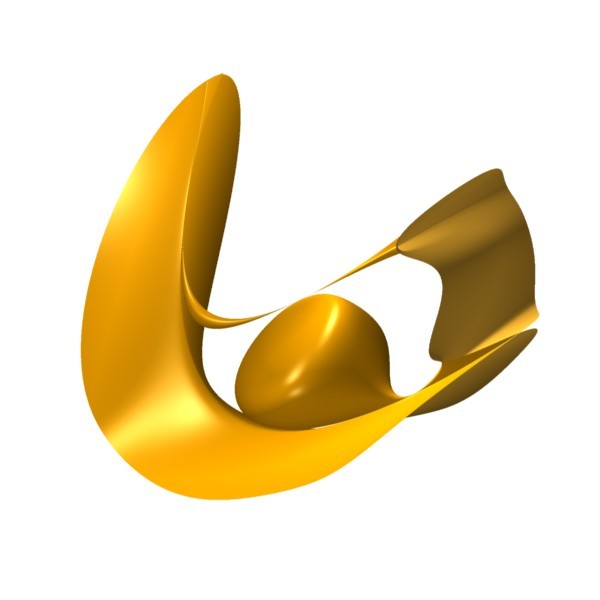}
\end{figure}

\end{description}

\begin{table}[htb!]\caption{Irreducible strata of non-normal surfaces in $\overline\gothM_{1,3}^{(Gor)}$}\label{tab: non-normal strata}
\begin{center}
 \begin{tabular}{ccl}
 \toprule
stratum & dimension  & minimal resolution $\tilde X$ \\
\midrule
$\gothd\gothP$ & $11$ &   del Pezzo surface of degree $1$\\
$\gothP$ & $4$ &    $\IP^2$\\
$\gothE$ & $2$ &  minimal ruled  surface with $\chi(\tilde X)=0$\\
\bottomrule
\end{tabular}
\end{center}
\end{table}

Next we describe the strata containing  non-normal Gorenstein surfaces. We refer to \S \ref{ssec: definitions} for the notation and the terminology. 
\begin{prop} \label{prop: non-normal-strata} The subset $\gothN\gothN \subset \overline \gothM_{1,3}^{(Gor)}$ of non-normal surfaces is equal to $\gothP\sqcup \gothd\gothP\sqcup\gothE$, where:
\begin{enumerate}
\item $\gothP\subset \overline \gothM_{1,3}^{(Gor)}$  consists of the non-normal surfaces with normalisation of type $(P)$,  and it   is irreducible and unirational   of dimension 4. The corresponding branch locus is a double quadric section plus a hyperplane section.
\item $\gothd\gothP\subset \overline \gothM_{1,3}^{(Gor)}$  consists of the non-normal surfaces with normalisation of type $(dP)$,  and it   is irreducible and unirational   of dimension 11. The corresponding  branch locus is a double hyperplane section plus a sufficiently general cubic section.
\item $\gothE\subset \overline \gothM_{1,3}^{(Gor)}$  consists of the non-normal surfaces with normalisation of type $(E_-)$,  and it   is irreducible and rational of dimension $2$. The corresponding branch locus is as in case $(dP)$ where the cubic section acquires a $[3,3]$ point.
\end{enumerate}
\end{prop}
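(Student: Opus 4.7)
The plan is to combine the double-cover description of Propositions \ref{prop: pluri-chi=3} and \ref{prop: double-cover} with the local classification of slc singularities of the branch curve from \S\ref{ssec:strata}. Writing the bicanonical branch as $\Delta=2A+B$ with $A$ the multiplicity-$2$ component and $B$ reduced, effectivity and $O\notin\Delta$ force both $A$ and $B$ to avoid $O$, hence to be Cartier; so $A\in|\OO_\kq(a)|$, $B\in|\OO_\kq(b)|$ with $2a+b=5$, and excluding the normal case $a=0$ only $(a,b)\in\{(1,3),(2,1)\}$ remain.

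To identify the normalization, I use that the composition $\bar\pi=\phi_2\circ\pi\colon \bar X\to\kq$ is itself a finite double cover, branched precisely along $B$ (the normalization of $z^2=u^2g$ with $u=0$ on $A$ is $w^2=g$ via $w=z/u$), and that the conductor satisfies $\bar D=\bar\pi^*A$. For $(a,b)=(2,1)$ the ramification formula gives $K_{\bar X}^2=9$ and $\bar X=\IP^2$ (realized explicitly by $(u,v,w)\mapsto (u,v,w^2)$ from $\IP^2$ onto $\IP(1,1,2)\cong \kq$), with $\bar D=\bar\pi^*A\sim 4H$ a quartic: stratum $\gothP$, type $(P)$. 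For $(a,b)=(1,3)$ the same formula gives $K_{\bar X}^2=1$ and $\bar\pi$ is the classical anticanonical realization of a Gorenstein degree-$1$ Del Pezzo as a double cover of $\kq$: when $B$ has only negligible singularities $\bar X$ has at most canonical singularities and is a genuine $dP_1$ (stratum $\gothd\gothP$, type $(dP)$); a $[3,3]$ point of $B$ produces a simple elliptic Gorenstein singularity of degree $1$ on $\bar X$, which I identify as type $(E_-)$ by resolving it and checking that the exceptional elliptic curve has self-intersection $-1$ (stratum $\gothE$). Type $(E_+)$ is ruled out by Theorem \ref{thm: types}(3), and Proposition \ref{prop: double-cover} provides the converse, so the three strata exhaust $\gothN\gothN$.

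For the geometric structure of the strata, each is dominated by an open subset of $|\OO_\kq(a)|\times|\OO_\kq(b)|$ (with the additional $[3,3]$ condition on $B$ for $\gothE$, imposed as in Proposition \ref{prop: normal strata}), modulo the connected group $\Aut(\kq)$. This gives irreducibility, unirationality (rationality for $\gothE$, whose parameter space after fixing the $[3,3]$ configuration is a pencil), and the dimensions in the table via the same count as in Proposition \ref{prop: normal strata}.

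The main obstacle will be the analysis of the $[3,3]$ case: one must verify that the resulting singularity of $\bar X$ is precisely the elliptic Gorenstein singularity produced by contracting a $(-1)$-elliptic section on a $\IP^1$-bundle over an elliptic curve (and not some other slc elliptic singularity), and that intersections of $A$ with singular points of $B$ do not introduce extra singularities incompatible with the classification of Gorenstein normalizations in \cite{FPR15a}.
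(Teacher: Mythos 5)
Your proposal follows essentially the same route as the paper: write $\Delta=\Delta_0+2\Delta_1$ with $\Delta_0$ reduced, identify the normalisation as the double cover of $\kq$ branched on $\Delta_0$ (and, by parity of $\deg\Delta_0$, also on the vertex $O$ --- your phrase ``branched precisely along $B$'' omits this, though your explicit formula $(u,v,w)\mapsto(u,v,w^2)$ implicitly includes it), sort the two numerical cases into types $(P)$, $(dP)$, $(E_-)$, and count dimensions of the linear systems modulo the automorphism group of $\kq$. The one genuine divergence is that the paper identifies the $[3,3]$ case as type $(E_-)$ by elimination from the classification of the possible pairs $(\bar X,\bar D)$ in \cite{FPR15a} (with $(E_+)$ excluded for $\chi=3$), rather than by directly resolving the elliptic singularity, which disposes of the verification you flag as the main obstacle; note also that for $\gothE$ the paper finds a $3$-dimensional linear subsystem modulo a $1$-dimensional stabiliser rather than a pencil.
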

\begin{proof}
 By Proposition \ref{prop: double-cover} a surface $X\in \overline \gothM_{1,3}^{(Gor)}$ is non-normal if and only if $\Delta$ is not reduced. Since $(\kq,\frac 12\Delta)$ is log-canonical, every component of $\Delta$ appears with multiplicity at most $2$, hence we may write $\Delta=\Delta_0+2\Delta_1$, with  $\Delta_1\in |\OO_{\kq}(k)|$ and  $\Delta_0\in |\OO_{\kq}(5-2k)|$, where $k=1$ or $k=2$. The normalisation $\bar X$ of $X$ is also a double cover of $\kq$, branched  over $ \Delta_0$ and the vertex $0\in \kq$.

Assume first $k=2$. Then $\Delta_0$ is a hyperplane section and $\bar X=\pp^2$, namely the normalisation of $X$ is of type $(P)$.

Assume now that $k=1$. If $\Delta_0$ has at most negligible singularities, then $\bar X$ is a del Pezzo surface of degree 1, hence $X$ is of type $(dP)$. 
If $\Delta_0$ has non-negligible singularities, then the only possibility is that $\bar X$ is of type $(E_-)$: in this case $\bar X$ has an elliptic singularity of degree $1$ and therefore $\Delta$ has a $[3,3]$-point.

For $k=2$, we may assume that  the section $\Delta_0$ is fixed. Letting $\Delta_1$ vary in an appropriate open subset $U\subset  |\OO_{\kq}(2)|$ one obtains all the surfaces of $\gothP$, hence $\gothP$ is unirational and irreducible. Since the subgroup of  automorphisms of $\kq$ that preserve $\Delta_0$ has dimension 4, it follows that  $\gothP$ has dimension $8-4=4$.

In case $k=1$ we may assume that $\Delta_1$ is fixed. Letting $\Delta_0$ vary in an appropriate open subset $U\subset  |\OO_{\kq}(3)|$ one obtains all the surfaces of $\gothd\gothP$, hence $\gothd\gothP$ is unirational and irreducible. Since the subgroup of  automorphisms of $\kq$ that preserve $\Delta_1$ has dimension 4, it follows that  $\gothd\gothP$ has dimension $15-4=11$.

To describe $\gothE$, notice that in this case the $[3,3]$-point $P\in \Delta_0$ does not lie on $\Delta_1$, because  $(\kq,\frac12 \Delta_0+\Delta_1)$ is a log-canonical pair. The subgroup   of the automorphisms of $\kq$ that preserve a fixed plane section $\Delta_1$ acts transitively on $\kq\setminus (\Delta_1\cup\{O\})$, so we may assume that the point $P$ is also fixed. In turn, the subgroup of the automorphisms of $\kq$ that preserve $\Delta_1\cup\{P\}$ fixes the infinitely near point $P_0$ corresponding to the ruling of $\kq$ containing $P$ and acts transitively on the set of points infinitely near to $P$ and distinct from $P_0$.  So it is enough to consider the divisors $\Delta_0\in|\OO_{\kq}(3)|$ with  triple points at $P$ and at a fixed infinitely near point $P_1$. Arguing as in the proof of Proposition \ref{prop: normal strata}, one sees that such divisors give an open subset of a $3$-dimensional linear subsystem of $|\OO_{\kq}(3)|$, so $\gothE$ is unirational and irreducible. (Actually, it is not hard to see that in this case $\Delta_0$ is the union of three plane sections passing through $P$ and $P_1$). 
The subgroup of automorphisms of $\kq$ that fix $\Delta_1$, $P$ and $P_1$ is $1$-dimensional, so $\gothE$ has dimension 2 and is rational. 
\end{proof}

In Figure \ref{fig: strata} we give a coarse picture of the relations of the strata, where we connect two strata if the lower one is contained in the closure of the upper one. It does not give much more than the obvious relations, because our stratification is note fine enough. Indeed, a general curve singularity of type $[3,3]$ cannot degenerate to an ordinary quadruple point, for example, because its Milnor number is too large; any degeneration to a quadruple point will not be ordinary. Thus to understand the intersections of the closures of the strata given in Table \ref{tab: normal strata} we would need a finer stratification, distinguishing the type of quadruple points occurring. For a description of the possible adjacencies see  \cite{brieskorn79}.

 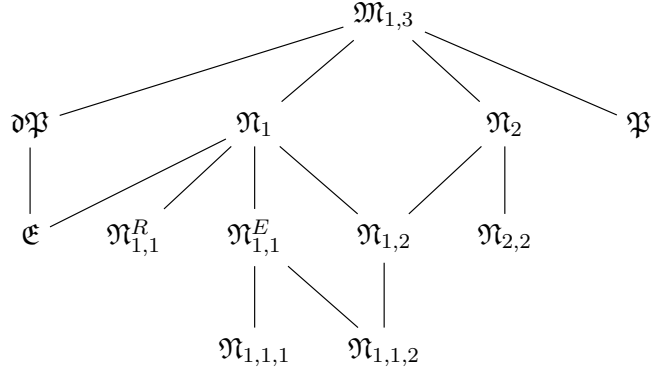
\begin{figure}\caption{Relation between some strata in $\overline\gothM_{1,3}^{(Gor)}$}\label{fig: strata}
\[
\begin{tikzcd}[arrows=dash, column sep = small]
 {}&&& \gothM_{1,3} \arrow{dlll} \arrow{dl}\arrow{dr}\arrow{drrr}\\
 \gothd\gothP\dar&& \gothN_1 \arrow{dll} \arrow{dl}\dar\arrow{dr} & & \gothN_2\dar\arrow{dl} & &\gothP\\
 \gothE&\gothN_{1,1}^R & \gothN^E_{1,1}\dar\arrow{dr} & \gothN_{1,2}\dar & \gothN_{2,2}\\
 &&\gothN_{1,1,1} & \gothN_{1,1,2}
 \end{tikzcd}
\]
\end{figure}

\begin{rem}
 The most degenerate surface $X_0$ that  we have encountered is the double cover of the quadric cone branched over $\Delta = H_1+2H_2+2H_3$ for three general hyperplane sections $H_1$, $H_2$, $H_3$, which is a particular surface of type $(P)$. The surface $X_0$ lies in the closure of $\gothE$, $\gothN_1$, $\gothN_{2,2}$, $\gothP$, and thus all strata lying above these in Figure \ref{fig: strata}. So we suspect $X_0$ is in the closure of every stratum that we considered. Finding a degeneration from the remaining strata with more than one elliptic singularity is however not obvious.
\end{rem}

\subsection{Some non-Gorenstein degenerations}\label{sec: non-Gorenstein}
We now discuss some increasingly general non-Gorenstein degenerations in $\overline\gothM_{1,3}$. More precisely, let $\gothD\gothC$ be the set of surfaces arising as double covers of the quadric cone as in Proposition \ref{prop: double-cover}. This family maps onto  an  irreducible  subset of $\overline\gothM_{1,3}$,  so that we have inclusions:

\[\overline\gothM_{1,3}^{(Gor)}\subset \gothD\gothC\subset \overline\gothM_{1,3}.\]
The examples given below will show that all the above inclusions are strict. 

 \begin{rem}\label{rem: index 5} One is tempted to consider also the subset $\gothH\gothS$ of slc hypersurfaces of degree $10$ in $\IP(1,1,2,5)$. However, it turns out that 
  this set coincides with $\gothD\gothC$, that is, no hypersurface of degree $10$ that passes through the $\frac 1 5(1,1,2)$ singularity of $\IP(1,1,2,5)$ has slc singularities. The argument, which was explained to us by Stephen Coughlan, runs as follows: 
  
  Let $\Delta$ be a disc with parameter $t$ and let $\IP = \IP(1,1,2,5)\times \Delta$. A general surface passing through the $\frac 1 5(1,1,2)$-singularity can be viewed as the central fibre $\mathcal{X}_0$ of a family of surfaces $\mathcal{X}\subset \IP$ with equation
  \[ tz^2 +f_5(x_0, x_1, y, t) z+f_{10}(x_0, x_1, y, t)=0,\]
  where $f_d$ is general of weighted degree $d$. We may assume that  for $t\neq0$ the fibres $\mathcal{X}_t$ are smooth.
  If $\mathcal{X}_0$ has slc singularities then  by \cite[Thm.~5.1]{ksb88} the total space $\mathcal{X}$ has canonical singularities. But  near $P_0=((0:0:0:1), 0)$ the $z$-coordinate is invertible and we can isolate $t$ in the local equation. In other words, $x_0, x_1, y$ are local orbifold coordinates near $P_0\in \mathcal{X}$ and $\mathcal{X}$ has a singularity of type $\frac 1 5(1,1,2)$ at $P_0$. By the Tai-Reid criterion (see \cite{Reid80} or \cite[Thm.~3.21]{KollarSMMP}),  this quotient singularity is not canonical, thus $\mathcal{X}_0$ cannot be slc.
  
  Indeed, Coughlan shows that the stable limit of the above family is a double cover of $\kq$ with Hurwitz divisor given by $f_5^2 =0$, i.e., a non-normal surface with normalisation two copies of $\kq$.
 \end{rem}

\subsubsection{ General surfaces in $\gothD\gothC\setminus \overline\gothM_{1,3}^{(Gor)}$}
As in Proposition \ref{prop: double-cover}, let $(\kq, \frac 12 \Delta)$ be a lc pair, where $\kq$ is the quadric cone and $\Delta$ is a quintic section. Let $p\colon X\to \kq$ be the associated double cover, which is a stable surface. It remains to treat the non-Gorenstein case, that is, the case where $\Delta$ contains the vertex $O$ of the cone.

Let $q\colon\kq'\isom \IF_2\to \kq$ be the blow up of the vertex, denote by $C$ the exceptional curve and write 
\[q^*(K_\kq +\frac 12 \Delta) = K_{\kq'} + \Delta'= K_{\kq'} + \frac 12 (\inverse q)_* \Delta + \Big( \frac{C\cdot(\inverse q)_* \Delta}{4} \Big) C, \]
where the coefficient of $C$ is computed by intersecting with $C$. Since $(\kq, \frac12\Delta)$ is lc if and only if $(Q', \Delta')$ is lc, we see that the strict transform of $\Delta$ intersects $C$ at most with multiplicity 4. Writing out the classes in the N\'eron-Severi group of $\kq'$ it is easy to check that the intersection is also even, which leaves us with three cases. 
  
We will now analyse $X$ via the commutative square
\[ \begin{tikzcd}
    X'\rar{q'}\dar{2:1}[swap]{p'} & X\dar{2:1}[swap]{p}\\
    \kq'\rar{q}& \kq
   \end{tikzcd}, 
\]
under the assumption that $\Delta$ is sufficiently general, in particular, $(\inverse q)_* \Delta $ is smooth and intersects $C$ transversally.

\textbf{ $C.(\inverse q)_* \Delta=0$:} This is the case where $\Delta$ does not contain the vertex. Note however, that since $O$ is in the branch locus of $p$, the double cover $p'$ is branched over $(\inverse q)_* \Delta +C$. The inverse image of $C$ is a $(-1)$-curve in $X'$ that is contracted to a smooth point of $X$, hence $X$ is a smooth surface of general type.

\textbf{ $C.(\inverse q)_* \Delta=2$:} Viewed as a degeneration of the previous case, the double cover $p'$ should be branched over $(\inverse q)_* \Delta +2C$. However, this would not be normal. The normalisation is the double cover branched over $(\inverse q)_* \Delta$ and thus the preimage of $C$ becomes a smooth rational curve of self-intersection $-4$. Thus $X$ has a $\frac 14(1,1)$ singularity above the vertex of the cone and is smooth otherwise  if $\Delta$ is sufficiently general.

If we let $F$ be a fibre of the ruling on $\kq'$ then $F.(\inverse q)_* \Delta=4$ and $2K_{\kq'}+(\inverse q)_* \Delta \sim 2F$. Hence $X'$ is a properly elliptic surface of Kodaira dimension 1 with $\chi(X')=\chi(X)=3$, because $X$ has rational singularities.

\textbf{ $C.(\inverse q)_* \Delta=4$:} Viewed as a degeneration of the previous cases, the double cover $p'$ should be branched over $(\inverse q)_* \Delta +3C$. Passing to the normalisation we get a  double cover branched over $(\inverse q)_* \Delta+C$.

 Since we chose $\Delta$ generic, $\Delta+C$ has 4 ordinary nodes along $C$; let $q''\colon \kq''\to \kq'$ be the blow up in these 4 points
 and consider the corresponding double cover.  
 Again passing to the normalisation, we obtain a double cover whose  branch divisor on $\kq''$ is the strict transform of $(\inverse q)_*\Delta+C$. 
On this double cover the configuration of curves we want to contract to $X$ has dual graph 
 \[\begin{tikzpicture}[auto,node distance=1cm,
  thick]
\node (1) {3};
\node (a1) [below left of=1] {2};
\node (a2) [below right of=1]{2};
\node (b1) [above left of=1]{2};
\node (b2) [above right of=1]{2};
  
  \path
    (1) edge  (a1)
    (1) edge  (a2)
        (1) edge  (b1)
            (1) edge  (b2);
\end{tikzpicture},
\]
where the numbers indicate  the negative self-intersection. The contraction of these curves gives a $\IZ/2$-quotient of an elliptic singularity on $X$
(see e.g. \cite[Ex.~3.28]{KollarSMMP}).

Arguing as in the previous case, the minimal resolution of $X$ is a properly elliptic surface with holomorphic Euler-characteristic equal to $3$.

\subsubsection{Surfaces which are not canonically embedded as hypersurfaces}
We now consider an example in $ \overline\gothM_{1,3} \setminus \gothD\gothC$.
\begin{example}
Consider, in the notation of Section \ref{section:canonical ring}, the ring $S_1[u]$ where $u$ has degree $2$.

Pick a family of surfaces depending on a parameter $t$
\[ X_t = \Proj S_1[u]/(f,g)\into \Proj S_1[u] \isom \IP(1,1,2,2,5)\]
where  $g = x_0x_1-tu$ and $f\in S_1[u]$ is general of degree $10$.

Then for $t\neq 0$ the surface $X_t$ is a Gorenstein stable surface as in  Theorem \ref{thm: canonical-ring}, \refenum{i},  since 
 we can use the equation  $g$ to eliminate the new variable $u$.

For $t = 0$ the bicanonical map realizes $X_0$ as a double cover of the union of two planes  $Y_0\subset \IP^3$, branched on a curve of degree $5$ and over the intersection line $r$ of the two planes. That is, $X_0$ consists of two singular $K3$ surfaces with five nodes,  which are double planes with branch curve a line plus a general quintic. The  surfaces are glued along the strict transform of the line and every canonical curve is a non-reduced curve supported on the non-normal locus of $X_0$. In particular,   $X_0$  is not canonically embedded in $\IP(1,1,2,5)$.

The surface $X_0$ is not Gorenstein, and its 
 Cartier index is equal to $2$.
 These surfaces give a $26$-dimensional locus inside the moduli space: the linear system of quintics in the two planes that match on the intersection line is of dimension $35$ and the automorphism group of $Y_0$ has dimension $9$. 
 \end{example}

\section{Bestiarium in $\overline\gothM_{1,2}^{(Gor)}$ and $\overline\gothM_{1,2}$ }\label{sec: pg=1}
In this section we consider the  moduli space of Gorenstein  stable surfaces with $K^2=1$ and $\chi=2$ (i.e. $p_g=1$). 
We refer to \cite{catanese79} and \cite{todorov80} for the analysis of the classical case.

By  Proposition  \ref{prop: pluri-chi=2} in this case the bicanonical map $X$ is  a degree 4 cover of the plane. The general theory of quadruple covers has been studied by Hahn-Miranda \cite{Hahn-MIranda1999} and by Casnati-Ekedahl \cite{Casnati-Ekedahl1996},
  but it is quite complicated and  a  description  as detailed as the one given  in section \ref{ssec:strata}  is not feasible in this case.   Thus here we restrict to a much coarser analysis, just giving a cornucopia of examples, mostly in the special case when the bicanonical map is Galois. 
  Canonical surfaces with this property are also called Kunev surfaces after \cite{kynev77} and they have been studied in connection with the failure of local Torelli for surfaces. The bicanonical map of  a Kunev surface factors through a double cover of a (singular) $K3$-surface of degree two.  From this point of view degenerations of such surfaces have been studied by Usui, again with applications to Torelli-type questions in mind \cite{usui87, usui00}.

  An overview over the examples we construct can be found in Table \ref{tab: normal strata pg=1}.

  \begin{table}\caption{Some normal and non-normal surfaces in $\overline\gothM_{1,2}^{(Gor)}$}\label{tab: normal strata pg=1}
 \begin{tabular}{ccclc}
 \toprule
type of cover  & example &normal   & minimal resolution $\tilde X$ & $\kappa(\tilde X)$\\
\midrule
\multirow{7}*{bi-double} &
$Z_1$ & yes & minimal elliptic surface 
& $1$\\
&$Z_{1,1}^A$&yes & blow up of an abelian surface & $0$\\
&$Z_{1,1}^B$& yes &blow up of a bielliptic  surface & $0$\\
&$Z_{4}$& yes &rational surface & $-\infty$\\
&$Z^{(dP)}$& no & del Pezzo surface of degree 1 & $-\infty$\\
&$Z^{(E_-)}$& no &  ruled with $\chi(\tilde X) = 0$ & $-\infty$\\
&$Z^{(P)}$& no &  $\IP^2$  & $-\infty$\\
\midrule
\multirow{2}*{iterated double}
& $Z_2^E$ & yes &blow up of an Enriques surface &$0$\\
& $Z_2^R$ &yes & rational surface &$-\infty$\\
\bottomrule
\end{tabular}

\end{table}

\subsection{Gorenstein bi-double covers of the plane}\label{sec: bidouble}
  In most of our  examples the bicanonical map is a  $\IZ_2^2$-cover (``bi-double cover''): it is not hard to show that  this is the case   when the terms $a_1$ and $a_2$ in the equations of Theorem \ref{thm: canonical-ring}, \refenum{ii} vanish (see \cite[\S 1, Prop. 10]{catanese79} for the smooth case).   

Non-normal abelian covers are studied in  \cite{alexeev-pardini12};
 we recall below the facts  that we need in the special case of a bi-double cover of $\phi\colon X\to \pp^2$. 
 If $X$ is demi-normal, then by \cite[Cor.~1.10]{alexeev-pardini12} $\phi$ is uniquely determined (up to isomorphism of covers) by effective divisors $D_i$ of $\pp^2$ of degree $d_i$, $i=0,1,2$,  such that:
 \begin{itemize}
 \item  $d_i\equiv d_j \mod 2$ for every $i,j$.
 \item the so-called Hurwitz divisor $\Delta:=\frac 12(D_0+D_1+D_2)$ has no component of multiplicity $>1$. 
 \end{itemize}
The divisors  $D_0$, $D_1$ and $D_2$ are called  the branch data of $\phi$; setting  $a_i=\frac{d_j+d_k}{2}$, where $i,j,k$ is a permutation of $0,1,2$, one has: 
 $$\phi_*\OO_X=\OO_{\pp^2}\oplus \OO_{\pp^2}(-a_0)\oplus \OO_{\pp^2}(-a_1)\oplus\OO_{\pp^2}(-a_2).$$
 
   \begin{prop}\label{prop: bi-double} In the  above set-up:
\begin{enumerate}
\item $2K_X=\phi^*\OO_{\pp^2}(d_0+d_1+d_2-6)$
\item $X$ is slc if and only if $(\pp^2, \Delta)$ is an lc pair 
\item $X$ is non-normal above an irreducible curve  $\Gamma$ of $\pp^2$ if and only if $\Gamma$ appears in $\Delta$ with coefficient  $1$
\item $X$ is Gorenstein if and only if $D_0\cap D_1\cap D_2=\emptyset$.
\end{enumerate}
\end{prop}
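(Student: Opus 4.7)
Parts \refenum{i}--\refenum{iii} should follow from the standard theory of $\IZ_2^2$-covers in its non-normal extension \cite{alexeev-pardini12}, while \refenum{iv} reduces to a small local computation at the triple points of the branch locus.

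For \refenum{i}, I would work on the open subset $U\subset X$ over which $\phi$ is either étale or simply branched along a smooth component of a single $D_i$; since $X\setminus U$ has codimension at least 2 and $X$ is $S_2$, it is enough to check the formula there. The usual Hurwitz formula gives $2K_U = \phi^*(2K_{\IP^2}+D_0+D_1+D_2)|_U$, and the $S_2$ property (together with the fact that $2K_X$ is Cartier, as it pulls back from a line bundle under the abelian cover) extends this uniquely to $2K_X = \phi^*\OO_{\IP^2}(d_0+d_1+d_2-6)$ on all of $X$.

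Taking halves as $\IQ$-divisors yields $K_X \equiv_{\IQ} \phi^*(K_{\IP^2}+\Delta)$. For \refenum{ii}, since $\phi$ is a finite Galois cover pullback preserves the order of discrepancies, so $X$ is slc iff $(\IP^2,\Delta)$ is lc; this is the abelian cover criterion in \cite{alexeev-pardini12}. For \refenum{iii}, a component $\Gamma$ has coefficient $1$ in $\Delta$ iff it lies in exactly two of the $D_i$, say $D_1$ and $D_2$. In the standard eigenspace presentation of the cover, the relation $\xi_3^2 = f_1 f_2$ becomes $\xi_3^2 = g^2$ locally near a generic point of $\Gamma$ (where $g$ cuts out $\Gamma$), exhibiting $X$ as non-normal along $\phi^{-1}(\Gamma)$; conversely, \cite[Cor.~1.10]{alexeev-pardini12} shows that every non-normal locus arises this way.

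For \refenum{iv}, I would enumerate the possible local configurations of $\phi^{-1}(p)$ as $p$ ranges over $\IP^2$. At points lying on at most one $D_i$, $\phi$ is unramified or simply branched and $X$ is smooth, hence Gorenstein. At a transverse intersection of exactly two $D_i$ the local model gives an $A_1$ surface singularity, which is Gorenstein. At a point of $D_0 \cap D_1 \cap D_2$, the local analytic model is the $\IZ_2^2$-cover of $\IC^2$ branched along three distinct lines through the origin, which one identifies with the cyclic quotient singularity of type $\tfrac14(1,1)$; since $1+1\not\equiv 0 \pmod 4$, it is not Gorenstein. Combining these three cases gives \refenum{iv}. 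The main (mild) obstacle is the local identification at the triple point; everything else is formal from the abelian cover machinery.
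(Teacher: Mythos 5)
Your overall route is the same as the paper's: parts \refenum{i}--\refenum{iii} are delegated to the non-normal abelian cover machinery of \cite{alexeev-pardini12} (the paper cites \cite[Prop.~2.5]{alexeev-pardini12} for \refenum{i}, \refenum{ii} and \cite[Thm.~1.9,~(2)]{alexeev-pardini12} for \refenum{iii}), and \refenum{iv} comes down to a local analysis at points of $D_0\cap D_1\cap D_2$. Two gaps remain, one small and one substantive. The small one is in \refenum{iii}: a component $\Gamma$ has coefficient $1$ in $\Delta=\frac12(D_0+D_1+D_2)$ not only when it lies in exactly two of the $D_i$, but also when it appears with multiplicity $2$ in a single $D_i$ (both cases are allowed by the set-up and both occur in the paper's examples; cf.\ Remark \ref{rem: normalise}). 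Your eigenspace computation $\xi^2=g^2\cdot(\text{unit})$ covers that second case too, but you should say so.

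The substantive gap is that your case analysis in \refenum{iv} silently assumes general position. For points on at most two of the $D_i$ you argue via ``smooth'' or ``$A_1$'', which fails as soon as a branch component is singular or two components are tangent at $P$ --- and the $D_i$ in this paper are routinely taken to be unions of concurrent lines, nodal cubics, etc. The correct argument, which is the one the paper uses, is that over such a point $\phi$ factors locally as a composition of two \emph{flat} double covers, each of which is a hypersurface in the total space of a line bundle; hence $X$ is a local complete intersection, so Gorenstein, there, with no genericity needed. Likewise, for the ``only if'' direction you only treat an \emph{ordinary} triple point of $D_0+D_1+D_2$, where the identification with $\frac14(1,1)$ (case 4.3 of \cite[Table~1]{alexeev-pardini12}) is valid; but $P\in D_0\cap D_1\cap D_2$ can be a much more degenerate point. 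The paper closes this by a deformation argument: move the $D_i$ in a family keeping the common point $P$, so that the nearby fibres have an ordinary triple point and hence a non-Gorenstein $\frac14(1,1)$ singularity; since the Gorenstein locus is open \cite[Cor.~3.3.15]{Bruns-Herzog}, the special fibre cannot be Gorenstein at $P$ either. Without some such argument your proof of \refenum{iv} only establishes the statement for transverse configurations.
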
 
\begin{proof}
\refenum{i}  and \refenum{ii} follow by \cite[Prop.~2.5]{alexeev-pardini12} and \refenum{iii} follows by \cite[Thm.~1.9, (2)]{alexeev-pardini12}.

To prove \refenum{iv} observe first of all that if a point $P\in \pp^2$  lies on at most two of the $D_i$ then locally above $P$ the map $\phi$ can be seen as the composition of two flat double covers, hence $X$ is Gorenstein above $P$.
 If $P$ lies on all the  $D_i$ and $D_0+D_1+D_2$ has an ordinary triple point at $P$, then $X$ has a singular point of type $\frac 14(1,1)$ at $P$ (see\cite[Table~1)]{alexeev-pardini12},  case 4.3). 
Since being Gorenstein is an open condition \cite[Cor.~3.3.15]{Bruns-Herzog}
it follows that $X$ is not Gorenstein at $P$ for any choice of divisors $D_i$ through $P$.
\end{proof}
By Proposition \ref{prop: bi-double}, to construct a stable surface with $K^2_X=1$ and $\chi(X)=2$ as a bi-double  cover of $\pp^2$ one has to choose as branch data a line $D_0$ and cubics $D_1$, $D_2$ such that $(\pp^2, \frac 12(D_0+D_1+D_2))$ is an lc pair: 
indeed, since $K^2=1$ we must have $d_0+d_1+d_2 = 7$   and thus all the $d_i$ have to be odd by the parity condition. Computing $2=\chi(X)=\chi(\phi_*\ko_X) = 4+\frac 1 2 \sum a_i(a_i-3)$ we see that the only possibility is $(d_0, d_1, d_2)=(1,3,3)$, that is, $(a_0, a_1, a_2) = (3,2,2)$. 
In particular, the projection formula implies
$|2K_X|=\phi^*|\OO_{\pp^2}(1)|$.

\begin{rem}\label{rem: normalise} Let $\phi\colon X\to\pp^2$ be an  slc  bi-double cover with branch data $D_i$, $i=0,1,2$. 
By Proposition \ref{prop: bi-double} the surface  $X$  is not normal above an irreducible curve $\Gamma$ of $\pp^2$ if and only if one of the following happens: 
\begin{itemize}
\item[(a)]  $\Gamma$ appears with multiplicity 2 in exactly one of the $D_i$
\item[(b)] $\Gamma$ appears with multiplicity 1 in exactly two  of the $D_i$.
\end{itemize}
Note that if case (b) occurs, then $X$ is not Gorenstein by Proposition \ref{prop: bi-double}, \refenum{iv}. The normalisation algorithm (see \cite[\S 3]{pardini91}) is very simple in this situation: in case (a) one subtracts $2\Gamma$ from the only divisor containing it, while in case (b) one subtracts $\Gamma$ from the two divisors containing it and adds it to the remaining one. In both cases the divisors thus obtained are the branch data of a cover $\phi'\colon X'\to\pp^2$ such that  $X'$ is normal above $\Gamma$ and there is a birational morphism $X'\to X$ commuting with the covering maps to $\pp^2$.

More generally, if $\phi$ is a standard bi-double  cover (see \cite[\S~1]{alexeev-pardini12} for the definition) with branch data $D_i$ then one   first reduces the $D_i$ modulo 2 and then  removes  all the irreducible components common to all the $D_i$. This way one obtains a cover such that $\Delta$ has no component of multiplicity $>1$, whose normalization can be computed as in the slc case. 
\end{rem}

We now  give  our examples   by describing   the branch data $D_i$. 
We take coordinates $(y_0,y_1,y_2)$ in $\pp^2$ so that 
the first branch divisor is the line $D_0=\{y_0=0\}$ and we only specify the cubics  $D_1$ and $D_2$.
The  possible singularities  of slc $(\IZ/2)^r$-covers such that the support of the Hurwitz divisor $\Delta$  has ordinary singularities have been classified in \cite[Table~1--4]{alexeev-pardini12}. In our restricted situation only two different normal singularities 
can occur, because $r=2$ and  $D_1$  and $D_2$ can have at most  three local branches through a point $P$:
\begin{itemize}
 \item $D=2\Delta$ has an ordinary quadruple point at $P$, such that three of the local components are in the same $D_i$. The resulting singularity is an elliptic singularity of degree $1$ (see case 4.5 in loc.cit.).
 \item Both $D_1$ and $D_2$ have an ordinary double point at $P$ such that $D$ has an ordinary quadruple point at $P$. The resulting singularity is an elliptic singularity of degree $4$ (see case 4.6 in loc.cit.).  
\end{itemize}

The low degree of the branch divisors leaves very few combinatorial possibilities. We will now describe all possible normal examples where $D$ has only ordinary singularities; the surface $Z_{d_1, \dots, d_r}$ will have $r$ elliptic singularities with the given degrees.
Often the identification of the minimal resolution $\pi\colon \tilde X \to X$ is immediate by the restrictions found in   \cite[Thm.\ 4.1]{FPR15a}.

\begin{description}
 \item[Example $Z_1$] Let $D_1$ be a union of three general lines through $P\in D_0$ and $D_2$ a general cubic. Then $X$ has a unique elliptic singularity of degree $1$.
 Blowing up at $P$ and then changing the branch divisor to get a normal bi-double cover $\tilde X \to \IP^2$ as in Remark \ref{rem: normalise}, one computes that $|2K_{\tilde X}|$ is an elliptic pencil, induced by the pencil of lines passing through $P$. Thus by \cite[Thm.\ 4.1]{FPR15a} $\tilde X$ is a minimal properly elliptic surface with $\chi(\tilde X) = 1$. 
 
 \item[Example $Z_1'$] If in the previous example the point $P$ is a general point on $D_1$ instead of  on $D_0$, we get, by the same computation,  a different family of surfaces with minimal resolution a minimal properly elliptic surface with $\chi(\tilde X) = 1$. 
 
 \item[Example $Z^A_{1,1}$]  Let $P, Q$ be two different points on the line $D_0$ and let $D_1$ be a union of three general lines through $P$ and $D_2$  be a union of three general lines through a $Q$. Then $X$ has  two elliptic singularities of degree $1$. Blowing up at $P$ and $Q$ and then changing the branch divisor to get a normal bi-double cover as in Remark \ref{rem: normalise}, one computes  that  $2K_{\tilde X}$ is linearly equivalent to the strict transform of  $ \phi_2^*D_0$ on $\tilde X$, which is twice a $(-1)$-curve. Thus $\tilde X$ is a blow-up of a surface with trivial bicanonical bundle and $\chi(\tilde X)=0$.  A formula to compute the sections of the canonical bundle of a singular bi-double cover has been given in \cite[Sect.~3]{catanese99}. Using it it is straightforward to check that $K_{\tilde X}$ is effective as well and hence the minimal model of $\tilde X$ is an abelian surface.

 \item[Example $Z^B_{1,1}$] Let $D_1$ be a union of three general lines through $P\in D_0$ and $D_2$  be a union of three general lines passing through a general point  $Q \in D_1$. Then $X$ has  two elliptic singularities  of degree $1$. Blowing up at $P$ and $Q$  and then changing the branch divisor to get a normal bi-double cover 
 $\tilde X \to \IP^2$ as in Remark \ref{rem: normalise}, one computes  that  $2K_{\tilde X}$ is linearly equivalent to the strict transform of  $ \phi_2^*D_0$ on $\tilde X$, which is twice a $(-1)$-curve. Thus $\tilde X$ is a blow-up of a surface with trivial bicanonical bundle and $\chi(\tilde X)=0$.

Using again \cite[Sect.~3]{catanese99} one checks that $K_X$ is not effective and hence the minimal model of $\tilde X$ is a bielliptic surface.
 
 \item[Example $Z_{4}$] Assume that both $D_1$ and $D_2$ have an ordinary node at a point $P$ not in $D_0$. Then $X$ has an elliptic singularity of degree $4$ and thus by  \cite[Thm.\ 4.1]{FPR15a} its minimal resolution is a rational surface. Indeed, the pull back of the pencil of lines through $P$ gives a free pencil of rational curves on the resolution.
 \end{description}
 
We now turn to the non-normal case.  By Proposition \ref{prop: bi-double}  $X$ is  non-normal and Gorenstein  if and only if there exists an irreducible curve  $\Gamma$ that appears with multiplicity 2 in one of the $D_i$  and is not contained in the remaining two. In particular, $\Gamma$ must be a line. 
This leaves very few possibilities that we describe below. We denote by $\bar X$ the normalisation of $X$; the possibilities for $\bar X$  are listed in  Section \ref{ssec: definitions}. 
\begin{description}
 \item[Example $Z^{(dP)}$] If $D_1=2L_1+L_2$ is the union of a double line and a general line and $D_2$ is general then $-2K_{\bar X} = \phi_2^*(-2K_{\IP^2}-D_0-L_2-D_2) \sim \phi_2^*L_2$ is ample of square $4$. Thus $\bar X$ is a del Pezzo surface of degree $1$ and  $X$ is of type $(dP)$.
 \item[Example $Z^{(dP)}_1=Z^{(E_-)}$]   If $D_1=2L_1+L_2$ is the union of a double line and a general line and $D_2$ is the union of three lines meeting $D_0$ at a general point, then $X$ acquires an additional elliptic singularity of degree $1$. Thus the normalisation is a singular del Pezzo of degree $1$ with minimal resolution a ruled surface over an elliptic curve: $X$ is of type $(E_-)$.
 \item[Example $Z^{(P)}$] If both $D_1$ and $D_2$ contain a double line then the normalisation $\bar X$  of $X$ is the projective plane. Indeed the induced cover $\bar X \to \IP^2$ is given by squaring the coordinates. 
 \end{description}
\begin{rem}\label{rem: E+}
Since the normalisation $\bar X$ of a non-normal bi-double cover of $\IP^2$ is again such a bi-double cover, the canonical divisor of $\bar X$ is the  pullback of some $\ko_{\IP^2}(d)$ and thus either ample, anti-ample, or trivial. Thus no bi-double cover can have normalisation of type $(E+)$.

A construction of a different flavor which relies on a glueing result of Koll\'ar has already been given in \cite[Sect. 3.3]{FPR15a}: let $E$ be an elliptic curve and $\bar D\subset S^2 E$ a general 3-section of the Albanese map. Then $\bar D$ is a smooth curve of genus 2. To get a non-normal stable surface, we glue $\bar D$ to itself via the hyperelliptic involution $\tau$. The result is a surface $X$ which is non-normal along $D=\bar D/\tau$. At a general point of $D$ the surface $X$ has a double normal-crossing singularity while at the branch points of $\bar D\to D$ we find pinch points with local equation $z^2-yx^2=0$.

It would be interesting to compute the canonical ring directly from this description, thus realizing $X$ as a complete intersection in $\IP(1,2,2,3,3)$. 
\end{rem}

\subsection{Iterated double covers}
We start by noting that a  bi-double cover with branch data $D_0$, $D_1$, $D_2$ can be seen as an iterated double cover as follows.  First one takes the double cover $f\colon Y\to \pp^2$ branched on $D_0+D_1$: if $D_0$ and $D_1$ intersect transversally, then  $Y$ is a singular Del Pezzo surface of degree 2 that has  ordinary double points over the three  intersection points of $D_0$ and $D_1$. The cover $g\colon X\to Y$ is obtained by taking the double cover of $Y$  branched on the singular points and on the divisor $B:=f^*D_2\in |-3K_Y|$. 

More generally,  the same construction can be performed taking as $B$ any element of the system $|-3K_Y|$ not passing through the singular points of $Y$; in this way one obtains a Gorenstein cover $\phi_2=g\circ f\colon X\to\pp^2$ with the same numerical invariants, which in general is not Galois.

We give two  variants of this construction by specifying the plane cubic  $D_1$ and the branch divisor $B$. It is possible to impose further or different singularities either on $D_0+D_1$ or on $B$ to get other examples but we will not pursue this here.

\begin{description}
  \item[Examples $Z_2^R$ and $Z_2^E$] 
 By taking $D_1$ general and choosing  $B$ 
 with a quadruple point a smooth point $Q$ of $Y$  such that  the infinitely near points are at most double 
 we obtain an example with an elliptic Gorenstein singularity of degree 2.
 The minimal desingularization $\wt X$ of $X$ has $\chi(X)=1$, hence by \cite[Thm.\ 4.1]{FPR15a} it is either rational or birational to an Enriques surface.  By the standard formulae for double covers, the  bicanonical curves of $\wt X$  correspond to the curves in $|-K_Y|$ with a double point at $Q$. So we have $P_2(\wt X)>0$  if $Q$ lies on the ramification divisor of $Y\to \pp^2$  and $P_2(\wt X)=0$ otherwise. So $\wt X$ is Enriques in the former case ($Z_2^E$) and it is rational ($Z_2^R$) in the other one.

 That such a divisor $B$ exists can be seen as follows: for $Z_2^E$ pick a general point $P$ on $D_1$, let $l_0$ be an equation of the tangent  to $D_1$ at $P$ and let $l_1, l_2$ be the equations of general lines through $P$.  Then the curve $B$ of equation $l_0l_1l_2=0$  has a quadruple point at the preimage $Q$ of $P$, with an infinitely near double point. Taking the  double cover branched on $B$ one obtains   an elliptic singularity of degree 2, whose exceptional cycle consists of a $-4$-curve and a $-2$-curve meeting transversally at two points.

  For $Z_2^R$ one can for instance take
 $Y = \{y^2-x_0(x_0^3+x_1^3+x_2^3+2x_0x_2^2)=0\}$
 and $B\subset Y$ given by  $ \{ x_1(y+x_0^2+x_2^2)=0\}$, which has the required quadruple point at $Q=(1:0:0:-1)$ and no other singularities.
 \end{description}

\subsection{Some non-Gorenstein examples}\label{sec: non-Gorenstein chi=2}
We conclude this section by giving two examples of non-Gorenstein Galois-covers of the plane.

The first one is a bidouble cover which occurs as a degeneration of the construction in Section \ref{sec: bidouble} but the second one is a (non-simple) cyclic cover with Galois-group $\IZ/4$. This cannot occur in the classical case and we do not know if it is contained in the closure of $\gothM_{1,2}$ in $\overline\gothM_{1,2}$.

\begin{example}
By Proposition \ref{prop: bi-double}, a non Gorenstein degeneration of a bi-double cover of the plane of the type analysed  in Section \ref{sec: bidouble} can be obtained by letting the divisors $D_i$ all go through a point $P$. If the $D_i$ are taken to be general otherwise, then $X$ has a singularity of type $\frac 14(1,1)$ over $P$ and is smooth elsewhere. The bicanonical system of the minimal resolution $\wt X$ is a free linear pencil of elliptic curves (the strict transform   of the  pencil of lines through $P$), hence $\wt X$ is properly elliptic. More degenerate configurations can be analysed as above.

We just briefly describe here the additional possibilities for the  non-normal case assuming that  the components of $\Delta$ are general (we  keep the previously  introduced notation):
\begin{itemize}
\item  $D_1$ and $D_2$ have a line in common. In this case  the normalisation $\bar X$ is an Enriques surface with two $A_1$ points.  
\item  $D_1$ and $D_2$ have a conic in common. In this case $\bar X$ is a singular del Pezzo surface of degree 1with four $A_1$ points.
\item $D_0$ is a component of $D_1$. In this case $\bar X$ is a singular $K3$  surface  with six  $A_1$ points.
\end{itemize} 
Notice that  of the  normalisations that we obtain in the first and the last case  cannot  occur in the Gorenstein case (cf. Section \ref{ssec: definitions}).
\end{example}

Finally we give an example such that the bicanonical map is a $\IZ_4$-cover:
\begin{example}

Let $D_1$ and  $D_2$ be lines and $D_3$ be a reduced cubic of $\pp^2$. If we take $L=\OO_{\pp^2}(3)$, the equivalence relation
$4L\equiv D_1+2D_2+3D_3$ is satisfied and therefore by \cite[Prop.~2.1]{pardini91} there exists a $\IZ_4$-cover $\phi\colon X\to \pp^2$ such that: 
\begin{itemize}
\item the preimages $R_1$ and $R_3$ of  $D_1$ and $D_3$  are fixed pointwise by  $\IZ_4$;  the group acts  on the normal space to $R_1$  at  a general point via a character $\chi$ of order 4 and on the normal space to the preimage of $R_3$  at  a general point via the opposite character $\chi\inv$.
\item the preimage $R_2$ of $D_2$ is fixed pointwise by the order 2 subgroup of $\IZ_4$ but not by all the group. 
\item  $\phi_*\OO_X=\OO_{\pp^2}\oplus \OO_{\pp^2}(-2)\oplus \OO_{\pp^2}(-2)\oplus\OO_{\pp^2}(-3)$.
\end{itemize}
In this case the Hurwitz divisor is $\Delta=\frac 34 (D_1+D_3)+ \frac 12 D_2$, hence $2K_X=\phi^*(2K_{\pp^2}+2\Delta)=\phi^*(\OO_{\pp^2}(1))$ and the projection formula  gives that $|2K_X|=\phi^*|\OO_{\pp^1}(1)|$, hence $X$ is $2$-Gorenstein. 
For a general choice of the $D_i$ the singularities of $X$ are three points of type $\frac 14(1,1)$, occurring over the intersection points of $D_1$ and $D_3$, and four points of type $A_1$ occurring over the intersection points of $D_2$ with $D_1+D_3$. In particular,  $X$ is not Gorenstein. 

An interesting feature of this example is that by \cite[Thm.~1.9, (2)]{alexeev-pardini12} if $X$ is demi-normal then it is normal, namely one cannot obtain non-normal examples by degenerating this construction.

These examples give an $8$-dimensional subset of $\overline\gothM_{1,2}$ but we do not know whether this set lies in the closure of $\overline\gothM_{1,2}^{(Gor)}$.
\end{example}


 \end{document}